\numberwithin{equation}{section}
\newcommand{\field}[1]{\mathbb{#1}}
\newcommand{\C}{\field{C}}
\newcommand{\N}{\field{N}}
\def\Im{{\rm Im}}
\newcommand{\cali}[1]{\mathscr{#1}}
\newcommand{\cH}{\cali{H}}
\DeclareMathOperator{\Ker}{Ker}
\DeclareMathOperator{\Dom}{Dom}
\newcommand{\ddbar}{\overline\partial}
\newcommand{\dbar}{\partial}
\newtheorem{thm}{Theorem}[section]
\newtheorem{lemma}[thm]{Lemma}
\newtheorem{cor}[thm]{Corollary}
\theoremstyle{definition}
\newtheorem{rem}[thm]{Remark}
\theoremstyle{definition}
\newtheorem{defn}[thm]{Definition}
\newcommand{\be}{\begin{eqnarray}}
\newcommand{\ee}{\end{eqnarray}}
\newcommand{\comment}[1]{}
\begin{document}
\title
{On the growth of von Neumann dimension of harmonic spaces of 
semipositive line bundles over covering manifolds}
\author{Huan Wang}
\address{Universit{\"a}t zu K{\"o}ln,  Mathematisches Institut,
    Weyertal 86-90,   50931 K{\"o}ln, Germany}

\date{16.05.2015}

\begin{abstract}
We study the harmonic space of line bundle valued forms over a covering manifold with a discrete group action $\Gamma$, and obtain an asymptotic estimate for the $\Gamma$-dimension of the harmonic space with respect to the tensor times $k$ in the holomorphic line bundle $L^{k}\otimes E$ and the type $(n,q)$ of the differential form, when $L$ is semipositive. In particular, we estimate the $\Gamma$-dimension of  the corresponding reduced $L^2$-Dolbeault cohomology group. Essentially, we obtain a local estimate of the pointwise norm of harmonic forms with valued in semipositive line bundles over Hermitian manifolds.
\end{abstract}
 
\maketitle 
         \section{Introduction}\label{intro}
\ 
The purpose of this paper is to study the growth of the von Neumann dimension of the space
of harmonic forms with values in powers of an invariant semipositive line bundle over
a Galois covering of a compact Hermitian manifold.
The main technical tool will be an estimate of the Bergman kernel on a compact
set of a Hermitian manifold, which generalizes a result of Berndtsson \cite{BB:02}
for compact manifolds. 
  
Let $(X,\omega)$ be a Hermitian (paracompact) manifold of dimension $n$ and
$(L,h^L)$ and $(E,h^E)$ be Hermitian holomorphic line bundles over $X$.
For $k\in\N$ we form the Hermitian line bundles $L^k:=L^{\otimes k}$ and
$L^{k}\otimes E$, the latter
endowed with the metric $h_k=(h^L)^{\otimes k}\otimes h^E$.

To the metrics $\omega$, $h^L$ and $h^E$ we associate the Kodaira Laplace operator $\square_k$
acting on forms with values in $L^{k}\otimes E$ and also
$L^2$ spaces of forms with values in $L^{k}\otimes E$, and 
$\square_k$  
has a (Gaffney) self-adjoint extension in the space of $L^2$-forms, 
denoted by the same symbol.

The space $\cH^{p,q}(X,L^{k}\otimes E)$ of harmonic $L^{k}\otimes E$-valued 
$(p,q)$-forms is defined as the kernel of (the self-adjoint extension of) $\square_k$ 
acting on the $L^2$ space of  $(p,q)$-forms.
 
In this paper we mainly work with $(n,q)$-forms.
Since $\cH^{n,q}(X,L^{k}\otimes E)$ is separable,
let $\{s^k_j\}_{j\geq1}$ be an orthonormal basis and denote by $B_k^q$
the Bergman density function defined by
\begin{equation}\label{e:Bergfcn}
B_k^q(x)=\sum_{j=1}^\infty|s^k_j(x)|_{h_k,\omega}^2\,,
\;x\in X,
\end{equation}
where $|\cdot|_{h_k,\omega}$ is the pointwise norm of a form. 
Definition \eqref{e:Bergfcn} is independent of the choice of basis.

The first main result of this paper is a uniform estimate of the Bergman density function
for semipositive line bundles in a neighborhood of a compact subset of a Hermitian manifold.

\begin{thm}\label{thm1}
Let $(X,\omega)$ be a Hermitian manifold of dimension $n$ and
$(L,h^L)$ and $(E,h^E)$ be Hermitian holomorphic line bundles over $X$. 
Let $K\subset X$ be a compact subset and assume that $(L,h^L)$ is semipositive on a neighborhood
of $K$.

Then there exists  $C>0$ depending on the compact set $K$, the metric $\omega$ 
and the bundles $(L,h^L)$ and $(E,h^E)$, such that for any $x\in K$, $k\geq 1$ and $q\geq 1$,
\begin{equation}\label{e:i2}
B^q_k(x) \leq C k^{n-q}\,,
\end{equation}
where $B^q_k(x)$ is the Bergman kernel function \eqref{e:Bergfcn} of harmonic 
$(n,q)$-forms with values in $L^k\otimes E$. 
\end{thm}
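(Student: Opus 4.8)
The plan is to reduce Theorem \ref{thm1} to a local pointwise estimate for a single harmonic form, and then to prove that estimate by a rescaled Bochner--Kodaira--Nakano argument followed by a mean value inequality.

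First I would invoke the extremal characterization of the Bergman density function: for every $x\in X$,
\[
B_k^q(x)=\sup\bigl\{|\alpha(x)|_{h_k,\omega}^2 : \alpha\in\cH^{n,q}(X,L^k\otimes E),\ \|\alpha\|_{L^2}\le 1\bigr\},
\]
which is independent of the chosen orthonormal basis. It therefore suffices to produce $r>0$ and $C>0$, depending only on $K$, $\omega$, $(L,h^L)$ and $(E,h^E)$, such that for every $x_0\in K$, every $k\ge 1$, every $q\ge 1$ and every harmonic $\alpha\in\cH^{n,q}(X,L^k\otimes E)$,
\[
|\alpha(x_0)|_{h_k,\omega}^2\le C\,k^{n-q}\int_{B(x_0,r)}|\alpha|_{h_k,\omega}^2\,dV_\omega\le C\,k^{n-q}\|\alpha\|_{L^2}^2.
\]
Since $K$ is compact, a finite cover lets me work in a fixed coordinate chart about each $x_0$ with holomorphic frames of $L$ and $E$, in which $h^L=e^{-\phi}$ with $\phi$ plurisubharmonic (the semipositivity of $L$) and $h^E=e^{-\psi}$ with $\psi$ bounded in $C^2$ independently of $k$.

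Next I would rescale by $w=\sqrt{k}\,z$ about $x_0$. This turns $k\phi$ into a weight whose complex Hessian is $O(1)$ and still semipositive, makes $\omega$ uniformly comparable to the Euclidean metric on $\{|w|\le R\}$ for each fixed $R$, and converts $\square_k\alpha=0$ into an elliptic system $\widetilde\square\,\widetilde\alpha=0$ with coefficients bounded in $C^1$ uniformly in $k$. Feeding $\bar\partial\alpha=0$ and $\bar\partial^*\alpha=0$ into the Bochner--Kodaira--Nakano identity then gives a differential inequality of the form $\Delta|\widetilde\alpha|^2\ge -C_0|\widetilde\alpha|^2$ with $C_0$ independent of $k$, so that $|\widetilde\alpha|^2$ is a uniformly controlled subsolution. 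A standard mean value inequality (Moser iteration) bounds $|\widetilde\alpha(0)|^2$ by its $L^2$ average over a unit ball, uniformly in $k$; undoing the rescaling, the volume form contributes a factor $k^n$ and yields the provisional bound $|\alpha(x_0)|_{h_k,\omega}^2\le C\,k^n\|\alpha\|_{L^2}^2$.

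The main obstacle, and the only place where $q\ge 1$ and the semipositivity enter essentially, is improving this provisional $k^n$ to the sharp $k^{n-q}$. The missing factor $k^{-q}$ must be extracted from the curvature term $[\imat\Theta(L^k\otimes E),\Lambda]$, which on $(n,q)$-forms acts with eigenvalues equal to sums of $q$ of the nonnegative eigenvalues of $k\imat\Theta(L)$ plus an $O(1)$ contribution from $E$. Pairing the Bochner--Kodaira--Nakano identity with the harmonic form shows that $\alpha$ cannot concentrate in the directions where this term is large, which suppresses the $q$ anti-holomorphic components of $\widetilde\alpha$ and effectively lowers the dimension by $q$; tracking this through the mean value inequality replaces $k^n$ by $k^{n-q}$. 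The delicate point is that $\imat\Theta(L)$ may degenerate on $K$, so the positive directions need not be uniform. I would therefore state the curvature estimate so that it degrades gracefully as eigenvalues vanish---when fewer than $q$ eigenvalues are positive the constraint on $\alpha$ is only stronger, so the bound improves---while keeping every constant uniform over the compact set $K$. Reassembling the local estimates over the finite cover then proves Theorem \ref{thm1}.
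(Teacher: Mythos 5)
Your overall architecture is right in its first two thirds: reducing $B^q_k$ to the extremal function $S^q_k$, rescaling by $\sqrt{k}$, and using an elliptic sub-mean-value inequality to get $|\alpha(x_0)|^2_{h_k,\omega}\le Ck^n\int_{B(x_0,2/\sqrt{k})}|\alpha|^2_{h_k,\omega}\,dv_X$ is exactly what the paper does in Lemma~\ref{lem3} (via Berman's local estimate). The genuine gap is in your final step, where the factor $k^{-q}$ is supposed to appear. You propose to extract it from the curvature operator $[\imat\Theta(L^k\otimes E),\Lambda]$, arguing that its positivity ``suppresses the $q$ anti-holomorphic components'' of $\alpha$. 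This cannot work as stated: the hypothesis is only semipositivity, so $\imat\Theta(L)$ may vanish identically on a neighborhood of $K$ (e.g.\ $L$ flat there), in which case the curvature term contributes nothing and imposes no constraint on $\alpha$ whatsoever --- yet the bound $k^{n-q}$ must still hold. Your remark that ``when fewer than $q$ eigenvalues are positive the constraint on $\alpha$ is only stronger'' is asserted, not proved, and is precisely the point at issue; as written, your mechanism degenerates to the provisional $k^n$ bound exactly in the regime the theorem is meant to cover.

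The paper's actual source of the gain is different and is the heart of the argument (Lemma~\ref{lem2}): a submeanvalue estimate for the \emph{local mass} of a harmonic form, $\int_{|z|<r}|\alpha|^2_{h_k,\omega}\,dv_X\le Cr^{2q}\|\alpha\|^2_{L^2}$, uniformly in $k$ and in the center point. Semipositivity of $L$ enters only negatively, to guarantee that the curvature term in Berndtsson's $\dbar\ddbar$-Bochner formula for the $(n-q,n-q)$-form $T_\alpha$ can be \emph{discarded}: one gets $i\dbar\ddbar(T_\alpha\wedge\omega_{q-1})\ge -C|\alpha|^2_{h_k,\omega}\,\omega_n$ with $C$ independent of $k$. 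The exponent $q$ then comes from the purely algebraic trace identity $T_\alpha\wedge\omega_{q-1}\wedge\beta=q\,(1+O(t^2))\,|\alpha|^2\omega_n$, which, after Stokes' theorem on the ball $\{|z|<t\}$, yields the differential inequality $2q(1+O(t^2))\,\sigma(t)\le t(1+O(t^2))\,\sigma'(t)$ for $\sigma(t)=\int_{|z|<t}|\alpha|^2\omega_n$; integrating this gives $\sigma(r)=O(r^{2q})$. Taking $r=2/\sqrt{k}$ and combining with the rescaled sup bound produces $k^n\cdot k^{-q}=k^{n-q}$. To repair your proof you would need to replace the curvature-eigenvalue heuristic with such a vanishing-order estimate for $\sigma(r)$ (or an equivalent substitute); without it, the passage from $k^n$ to $k^{n-q}$ is unsupported.
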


For $X$ compact and $K=X$, Theorem \ref{thm1} reduces to \cite[Theorem {2.3}]{BB:02}. 
Theorem \ref{thm1} will be used to obtain the following
bounds for the von Neumann dimension of the harmonic spaces
on covering manifolds. 

\begin{thm}\label{thm2}
Let $(X,\omega)$ be a Hermitian manifold of dimension $n$ on which a discrete 
group $\Gamma$ acts holomorphically, freely and properly such that $\omega$ is a $\Gamma$-invariant Hermitian 
metric and the quotient $X/\Gamma$ is compact. Let $(L,h^L)$ and $(E,h^E)$ be two $\Gamma$-invariant 
holomorphic Hermitian line bundles on $X$. Assume $(L,h^L)$ is semipositive on $X$.
Then there exists $C>0$ such that for any 
$q\geq 1$ and  $k\geq 1$ we have
\begin{equation}\label{eq2}
\dim_{\Gamma}\cH^{n,q}(X,L^k\otimes E)\leq C k^{n-q}\,,\:\:
\dim_{\Gamma}\cH^{0,q}(X, L^k\otimes E)
\leq C k^{n-q}\,.
\end{equation}
The same estimate also holds for the reduced $L^2$-Dolbeault cohomology groups,
\begin{equation}\label{eq16}
\dim_{\Gamma}{\overline{H}}^{0,q}_{(2)}(X, L^k\otimes E)
\leq C k^{n-q}.
\end{equation}
\end{thm}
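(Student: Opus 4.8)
The plan is to reduce all three estimates to the pointwise bound of Theorem \ref{thm1} by means of the $\Gamma$-trace formula for the von Neumann dimension. Since $\Gamma$ acts holomorphically, freely, properly and cocompactly, and $\square_k$ is a $\Gamma$-invariant elliptic operator, the orthogonal projection $P_k^q$ onto $\cH^{n,q}(X,L^k\otimes E)$ has a smooth Schwartz kernel $P_k^q(x,y)$, and Atiyah's $L^2$-index formalism identifies the $\Gamma$-dimension with the integral of the pointwise trace of this kernel over a fundamental domain $F$. First I would observe that the value of this trace on the diagonal is exactly the Bergman density function, $\tr P_k^q(x,x)=B_k^q(x)$, because $B_k^q(x)=\sum_j|s_j^k(x)|^2_{h_k,\omega}$ is the pointwise trace of the reproducing kernel and is independent of the orthonormal basis. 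Hence
\[
\dim_{\Gamma}\cH^{n,q}(X,L^k\otimes E)=\int_F B_k^q(x)\,dV_\omega(x).
\]
Choosing $F$ with compact closure $\ol F=:K$ (possible by cocompactness) and using that $(L,h^L)$ is semipositive on all of $X$, the hypotheses of Theorem \ref{thm1} hold near $K$, so $B_k^q(x)\le Ck^{n-q}$ there; integrating gives $\dim_{\Gamma}\cH^{n,q}(X,L^k\otimes E)\le Ck^{n-q}\vol(F)$ with $\vol(F)<\infty$, which is the first estimate in \eqref{eq2} after absorbing $\vol(F)$ into the constant.

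For the $(0,q)$ estimate I would tensor with the canonical bundle. Writing $K_X$ for the canonical line bundle of $X$ with the $\Gamma$-invariant metric induced by $\omega$, and setting $E':=E\otimes K_X^{-1}$, the natural isomorphism $\Lambda^{n,q}T^*X\cong K_X\otimes\Lambda^{0,q}T^*X$ identifies $(n,q)$-forms valued in $L^k\otimes E'$ with $(0,q)$-forms valued in $K_X\otimes L^k\otimes E'=L^k\otimes E$. This identification is a $\Gamma$-equivariant isometry of Hilbert spaces commuting with $\ddbar$ and its adjoint, hence with $\square_k$, so it maps harmonic forms to harmonic forms and yields $\cH^{0,q}(X,L^k\otimes E)\cong\cH^{n,q}(X,L^k\otimes E')$ as $\Gamma$-modules. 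Since $E'$ is again a $\Gamma$-invariant holomorphic Hermitian line bundle, the $(n,q)$ estimate applied with $E$ replaced by $E'$ gives $\dim_{\Gamma}\cH^{0,q}(X,L^k\otimes E)\le C'k^{n-q}$, the second estimate in \eqref{eq2}.

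The bound \eqref{eq16} for the reduced $L^2$-Dolbeault cohomology then follows from the weak (i.e. $L^2$) Hodge--Kodaira decomposition: the reduced cohomology $\ol H^{0,q}_{(2)}(X,L^k\otimes E)=\Ker\ddbar/\ol{\ran\ddbar}$ is $\Gamma$-equivariantly isomorphic to the harmonic space $\cH^{0,q}(X,L^k\otimes E)$, so the two $\Gamma$-dimensions coincide and \eqref{eq16} is immediate from the preceding paragraph.

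The main obstacle is making rigorous the trace formula $\dim_{\Gamma}=\int_F B_k^q\,dV_\omega$: one must verify that each harmonic space is a $\Gamma$-module of finite von Neumann dimension and that the projection kernel is continuous with a well-defined, basis-independent $\Gamma$-trace. This rests on elliptic regularity for the $\Gamma$-invariant operator $\square_k$ together with the standard $\Gamma$-trace formalism, which applies precisely because the action is free, proper and cocompact. Once this identification is in place, semipositivity enters only through the pointwise bound of Theorem \ref{thm1}, and the remaining two cases are purely formal reductions via the canonical bundle and $L^2$ Hodge theory.
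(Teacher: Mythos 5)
Your proposal is correct and follows essentially the same route as the paper: integrate the Bergman density bound of Theorem \ref{thm1} over a relatively compact fundamental domain via the $\Gamma$-trace formula (the paper cites \cite[(3.6.11), (3.6.17)]{MM} for this), reduce the $(0,q)$ case to the $(n,q)$ case by replacing $E$ with $E\otimes\Lambda^n(T^{(1,0)}X)$, and pass to reduced $L^2$-Dolbeault cohomology by the weak Hodge decomposition.
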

For a compact manifold $X$, the growth of the dimension of the Dolbeault cohomology 
$H^{0,q}(X,L^k\otimes E)$ as $k\to\infty$ is of fundamental importance in algebraic
and complex geometry and is linked to the structure of the manifold, cf.\ \cite{Dem,Dem:85,MM}.
If $(L,h^L)$ is positive, then $H^{0,q}(X,L^k\otimes E)=0$ for $q\geq1$ and $k$ large enough,
by the Kodaira-Serre vanishing theorem \cite[Theorem\,1.5.6]{MM}. This reflects the
fact that the remaining cohomology space $H^{0,0}(X,L^k\otimes E)$ is rich enough
to provide a projective embedding of $X$, for large $k$.

Assume now that $(L,h^L)$ is semipositive. The solution of the Grauert-Riemenschneider conjecture
by Demailly \cite{Dem:85} and Siu \cite{Sil:84} shows that $\dim H^{0,q}(X,L^k\otimes E)=o(k^n)$ 
as $k\to\infty$ for $q\geq1$. This can be used to show that $X$ is a Moishezon manifold,
if $(L,h^L)$ is moreover positive at at least one point.
Berndtsson \cite{BB:02} showed that we have actually $\dim H^{0,q}(X,L^k\otimes E)=O(k^{n-q})$ 
as $k\to\infty$ for $q\geq1$. Note that the latter estimate can be
proved by induction on the dimension if $X$ is projective, see \cite[(6.7) Lemma]{Dem:94}. 
For the Bergman kernel $B^0_k$ on $(n,0)$-forms with values in a semipositive line bundle,
it was shown by Hsiao-Marinescu \cite[Theorem 1.7]{HsM} that it has an asymptotic expansion
on the set where the curvature is strictly positive.
 
The study of $L^2$ cohomology spaces on coverings of compact manifolds
has also interesting applications, cf.\ \cite{GHS:98,Ko:95}.
The results are similar to the case of compact manifolds, but
we have to use the reduced $L^2$ cohomology groups and von Neumann dimension 
instead of the usual dimension.
For example, in the situation of Theorem \ref{thm2}, if the invariant line bundle $(L,h^L)$
is positive, the Andreotti-Vesentini vanishing theorem \cite{AV:65} 
shows that ${\overline{H}}^{0,q}_{(2)}(X, L^k\otimes E)\cong\cH^{0,q}(X, L^k\otimes E)=0$ 
for $q\geq1$ and $k$ large enough.
The holomorphic Morse inequalities of Demailly \cite{Dem:85} were generalized to coverings
by Chiose-Marinescu-Todor \cite{MTC:02,TCM:01} (cf.\ also \cite[(3.6.24)]{MM}) and yield in the conditions
of Theorem \ref{thm2} that $\dim_{\Gamma} H^{0,q}_{(2)}(X,L^k\otimes E)=o(k^n)$ 
as $k\to\infty$ for $q\geq1$. Hence Theorem \ref{thm2} generalizes \cite{BB:02}
to covering manifolds and refines the estimates obtained in \cite{MTC:02,TCM:01}.

Note also that the magnitude $k^{n-q}$ in \eqref{eq2} cannot be improved in general 
\cite[Proposition 4.2]{BB:02}.

Our paper is organized in the following way. In the section \ref{pre}, we introduce the notations and recall the necessary facts. In the section \ref{pro}, we prove some properties of harmonic line bundle valued forms, including $\dbar\ddbar$-formulas on non-compact manifolds and submeanvalue formulas, which imply Theorem \ref{thm1}. In the section \ref{proof}, we prove our main results and a corollary, and explain that Theorem \ref{thm1} implies Theorem \ref{thm2}.

\section{Preliminaries}\label{pre}
  
We introduce here the notations and recall the necessary
facts used in the paper. Let $(X, \omega)$ be a Hermitian manifold of dimension $n$ and
$(L,h^L)$ and $(F, h^F)$ be Hermitian holomorphic line bundles over $X$. Let $\Omega^{p,q}(X, F)$ be the space of smooth $(p,q)$-forms on $X$ with values in $F$ for $p,q\in \N$. The curvature of $(F, h^F)$ is defined by $R^F=\ddbar\dbar \log|s|^2_{h^{L}}$ for any local holomorphic frame $s$, then the Chern-Weil form of the first Chern character of $F$ is $c_1(L, h^L)=\frac{\sqrt{-1}}{2\pi}R^F$, which is a real $(1,1)$-forms on $X$. The volume form is given by $dv_{X}=\omega_n $ where $\omega_q:=\frac{\omega^q}{q!}$ for $1\leq q\leq n$.

We will use several times the notion of positive $(p,p)$-form, for which we refer to \cite[Chapter III, \S 1, (1.1) (1.2)(1.5)(1.7)]{Dem}. If a $(p,p)$-form $T$ is positive, we write $T\geq 0$. Given two $(p,p)$-forms $T_1$, $T_2$ we say that $T_1\geq T_2$, if $T_1-T_2\geq 0$. From definitions the following statement follows easily: if $T\geq 0$ is a $(p,p)$-form and $u\geq 0$ is a $(1,1)$-form, then $T\wedge u\geq 0$.
 
 The $L^2$-scalar product on $\Omega^{p,q}(X, F)$ is given by $\langle s_1,s_2 \rangle=\int_X \langle s_1(x), s_2(x) \rangle_{h^F,\omega} dv_X(x) $, where $\langle ,\rangle_{h^F,\omega}$ is the pointwise Hermitian inner product induced by $\omega$ and $h^F$. We denote by $L^2_{p,q}(X, F)$, the $L^2$ completion of $\Omega^{p,q}_0(X, F)$, which is the subspace of $\Omega^{p,q}(X, F)$ consisting of elements with compact support.
  
Let $\ddbar^{F}: \Omega_0^{p,q} (X, F)\rightarrow L^2_{p,q+1}(X,F) $ be the Dolbeault operator and let  $ \ddbar^{F}_{\max} $ be its maximal extension (see \cite[Lemma 3.1.1]{MM}). From now on we still denote the maximal extension by $ \ddbar^{F} :=\ddbar^{F}_{\max} $ and  the associated Hilbert space adjoint by $\ddbar^{F*}:=(\ddbar^{F}_{\max})_H^*$ for simplifying the notations. Consider the complex of closed, densely defined operators
 $L^2_{p,q-1}(X,F)\xrightarrow{\ddbar^{F}}L^2_{p,q}(X,F)\xrightarrow{\ddbar^{F}} L^2_{p,q+1}(X,F)$,
then $(\ddbar^{F})^2=0$.

By \cite[Proposition.3.1.2]{MM}), the operator defined by
\begin{eqnarray}\label{eq40} \nonumber
	\Dom(\square^{F})&=&\{s\in \Dom(\ddbar^{F})\cap \Dom(\ddbar^{F*}): \ddbar^{F}s\in \Dom(\ddbar^{F*}),~\ddbar^{F*}s\in \Dom(\ddbar^{F}) \}, \\ 
	\square^{F}s&=&\ddbar^{F*} \ddbar^{F}s+\ddbar^{F*} \ddbar^{F}s \quad \mbox{for}~s\in \Dom(\square^{F}),
\end{eqnarray}
is a positive, self-adjoint extension of Kodaira Laplacian, called the Gaffney extension.

\begin{defn} 
The space of harmonic forms $\cH^{p,q}(X,F)$ is defined by 
\begin{equation}\label{eq45}
\cH^{p,q}(X,F):=\Ker(\square^{F})=\{s\in \Dom(\square^{F})\cap L^2_{p,q}(X, F): \square^{F}s=0 \}.
\end{equation}
The $q$-th reduced $L^2$-Dolbeault cohomology is defined by 
\begin{equation}\label{eq46}
\overline{H}^{0,q}_{(2)}(X,F):=\dfrac{\Ker(\ddbar^{F})\cap  L^2_{0,q}(X,F) }{[ \Im( \ddbar^{F}) \cap L^2_{0,q}(X,F)]},
\end{equation}
where $[V]$ denotes the closure of the space $V$.
\end{defn} 
According to the general regularity theorem of elliptic operators (also cf.\cite[Theorem A.3.4]{MM}), 
$s\in \cH^{p,q}(X,F) $ implies $s\in\Omega^{p,q}(X,F)$. The Bergman kernel function defined in \eqref{e:Bergfcn} 
is well-defined by an adaptation of 
\cite[Lemma 3.1]{CM} to form case. By weak Hodge decomposition (cf.\ \cite[(3.1.21)(3.1.22)]{MM}), 
we have a canonical isomorphism  
\begin{equation}\label{eq47}
\overline{H}^{0,q}_{(2)}(X,F)\cong \cH^{0,q}(X,F)
\end{equation} for any $q\in \N$, which associates to each cohomology class its unique harmonic representative. 
 
A Hermitian manifold $(X,\omega)$ is called complete, if all geodesics are defined for all time for the underlying Riemannian manifold. By \cite[Corollary 3.3.3, 3.3.4]{MM}, if $X$ is complete, then $\square^F|_{\Omega^{p,q}(X,F)}$ is essentially self-adjoint.

           \section{Some properties of harmonic line bundle valued forms}\label{pro}
           
In this section, we work under the following general setting, and later the covering manifold with a group action $\Gamma$ will be treated as a special case in the section \ref{proof}. Let $(X ,\omega)$ be a Hermitian manifold of dimension $n$ and $(F, h^{F})$ be a holomorphic Hermitian line bundle on $X$. For the Kodaira Laplacian $\square:=\square^{F}$ we denote still by $\square$ its (Gaffney) self-adjoint extension. 

          \subsection{The $\dbar\ddbar$-Bochner formula for non-compact manifolds} 
          \  
The following $\dbar\ddbar$-formula was obtained by B. Berndtsson in \cite{BB:02} 
and \cite{BB:03} for $\ddbar$-closed, line bundle valued, $(n,q)$- forms over compact manifolds. 
We can rephrase it for $\square$-closed, line bundle valued, $(n,q)$- forms over any compact subset 
of a Hermitian (possibly non-compact) manifold. 
The proof is analogue to \cite[Proposition 2.2]{BB:02} and \cite[Proposition 6.2]{BB:03}, 
thus we omit it here. 
 
Let $\alpha\in\cH^{n,q}(X,F)$. We define now a positive
$(n-q,n-q)$ form $T_\alpha$ on $X$ as follows.
Let $U$ be an open set such that $F|_U$ is trivial and let
$e_F$ be a local holomorphic frame on $U$ and set $|e_F|^2_{h^{F}}=e^{-\psi}$.
Write $\alpha|_U=\xi\otimes e_F$, with $\xi\in\Omega^{n,q}(U,\C)$. 
The $(n-q,n-q)$ form $T_\alpha$ is defined locally by
$T_\alpha|_U:= i^{{(n-q)}^2}(\star\xi)\wedge\overline{(\star\xi)}e^{-\psi}$, where $\star:\Omega^{n,q}(U,\C) \to \Omega^{n-q, 0}(U,\C) $
is the Hodge star operator associated to the metric $\omega$ given by $\xi\wedge\overline{\star\xi}=|\xi|^2_{\omega}\omega_n$. 
It is easy to check that $T_{\alpha}$ is well defined globally.

We have a $F$-valued $(n-q,0)$ form $\gamma_{\alpha} \in \Omega^{n-q, 0}(X,F)$ 
associated to $\alpha$ defined locally by ${\gamma_{\alpha}}|_U:=(\star\xi)\otimes e_F$, 
which is also well defined globally. Let $L:=\omega \wedge \cdot$ be the Lefschetz operator 
on $\Omega^{p,q}(X,F)$ and let $\Lambda$ be its dual operator defined by 
$ \langle \Lambda\cdot,\cdot \rangle_{h^F,{\omega}}=\langle \cdot,L\cdot \rangle_{h^F,{\omega}}$.
  
\begin{thm}\label{lem1}
Let $(F,h^{F})$ be a holomorphic Hermitian line bundle over a Hermitian manifold 
$(X,\omega)$. Assume $\alpha\in\cH^{n,q}(X,F)$, $q\geq 1$, and $K\subset X$ 
is a compact subset. Then there exist non-negative constants $C_1$ and $C_2$ depending on $\omega$ and $K$, such that 
\begin{eqnarray}\label{eq3} 
i\dbar\ddbar(T_{\alpha}\wedge \omega_{q-1})
&\geq&
 (\langle 2\pi c_1(F, h^F)\wedge \Lambda \alpha, \alpha \rangle_{h^F,\omega} 
 -C_1|\alpha|_{h^F,\omega}^{2}+C_2|\ddbar^{F}{\gamma_{\alpha}}|_{h^F,\omega}^2)\omega_n \\ \nonumber
 &\geq& (\langle 2\pi c_1(F, h^F)\wedge \Lambda \alpha, \alpha \rangle_{h^F,\omega}-C_1|\alpha|_{h^F,\omega}^{2})\omega_n
\end{eqnarray}
on $K$. Here $\langle c_1(F, h^F) \wedge\Lambda\alpha,\alpha \rangle_{h^F,\omega} \omega_n
= c_1(F, h^F)\wedge T_{\alpha}\wedge\omega_{q-1}$ on $X$. In particular, if $X$ is K\"{a}hler, then 
\begin{eqnarray}\label{eq112}
i\dbar\ddbar(T_{\alpha}\wedge \omega_{q-1})
&=&  
(i\dbar\ddbar T_{\alpha})\wedge \omega_{q-1} \\ \nonumber
&=& (\langle 2\pi c_1(F, h^F)\wedge \Lambda \alpha, \alpha \rangle_{h^F,\omega} +|\ddbar^F{\gamma_{\alpha}}|_{h^F,\omega}^2)\omega_n 
\end{eqnarray}
on $K$. Above $\langle\,\cdot\,,\cdot\,\rangle_{h^F,\omega}$ and $|\cdot|_{h^F,\omega}^2$ denote the pointwise Hermitian metric and norm on $F$-valued differential forms induced by $\omega$ and $h^F$. 
\end{thm}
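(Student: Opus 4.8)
The plan is to carry out, following \cite{BB:02,BB:03}, a pointwise Leibniz expansion of $i\dbar\ddbar(T_{\alpha}\wedge\om_{q-1})$ in a local frame and to organize the result into a curvature term, a nonnegative $\ddbar$-term, and error terms that vanish in the K\"ahler case. Since the harmonic form $\alpha$ is smooth by elliptic regularity (recalled in Section~\ref{pre}) and the asserted bound is pointwise on $K$, I would first fix a trivializing set $U$ with holomorphic frame $e_F$, $|e_F|^2_{h^F}=e^{-\psi}$, and write $\alpha|_U=\xi\otimes e_F$, $u:=\star\xi\in\Omega^{n-q,0}(U,\C)$, so that $\gamma_{\alpha}|_U=u\otimes e_F$ and $T_{\alpha}|_U=i^{(n-q)^2}u\wedge\overline u\,e^{-\psi}$. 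In these terms the whole identity becomes a statement about the scalar $(n-q,0)$-form $u$ weighted by $e^{-\psi}$, and $i\dbar\ddbar(T_{\alpha}\wedge\om_{q-1})$ may be expanded by the product rule.

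The term in which both derivatives fall on the weight produces the curvature: since $\log|e_F|^2_{h^F}=-\psi$ we have $R^F=\ddbar\dbar(-\psi)$ and hence $i\dbar\ddbar\psi=2\pi c_1(F,h^F)$. Combined with the purely algebraic identity $c_1(F,h^F)\wedge T_{\alpha}\wedge\om_{q-1}=\langle c_1(F,h^F)\wedge\Lambda\alpha,\alpha\rangle_{h^F,\om}\,\om_n$ asserted in the statement, this yields the leading curvature term. I would establish that algebraic identity pointwise at a fixed $x_0$ by diagonalizing $\om$ and $c_1(F,h^F)$ simultaneously in suitable coordinates and testing both sides on the primitive decomposition of $\alpha$, so that the dual Lefschetz operator $\Lambda$ and the Hodge star $\star$ hidden in $T_{\alpha}$ act diagonally.

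The cross terms, in which the derivatives fall on $u$ and $\overline u$, are where the hypothesis $\alpha\in\cH^{n,q}(X,F)$ is used: harmonicity gives $\ddbar^{F}\alpha=0$ and $\ddbar^{F*}\alpha=0$, and through the Hodge-theoretic identity relating $\ddbar^{F*}$ to the Hodge star of $\dbar$ of the starred form, the condition $\ddbar^{F*}\alpha=0$ cancels the $\dbar u$-contributions and leaves precisely the nonnegative term $|\ddbar^{F}\gamma_{\alpha}|^2_{h^F,\om}\,\om_n$ (note that $\ddbar^{F}\gamma_{\alpha}$ is not itself constrained by harmonicity). If $X$ is K\"ahler then $d\om=0$, hence $\dbar\om_{q-1}=\ddbar\om_{q-1}=0$ and $\om_{q-1}$ commutes through $i\dbar\ddbar$; the expansion then closes exactly and gives both equalities of \eqref{eq112}, namely $i\dbar\ddbar(T_{\alpha}\wedge\om_{q-1})=(i\dbar\ddbar T_{\alpha})\wedge\om_{q-1}=(\langle 2\pi c_1(F,h^F)\wedge\Lambda\alpha,\alpha\rangle_{h^F,\om}+|\ddbar^{F}\gamma_{\alpha}|^2_{h^F,\om})\om_n$.

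The main obstacle is the general Hermitian case, where $d\om\neq0$ produces additional error terms built from $\dbar\om$, $\ddbar\om$ and $\dbar\ddbar\om$ wedged with $u$, $\overline u$ and their first derivatives. I expect to bound each of these on the compact set $K$ by Cauchy--Schwarz: since the torsion of $\om$ and its first derivatives are bounded on $K$, every such term is dominated by $C_1|\alpha|^2_{h^F,\om}\,\om_n$ together with an arbitrarily small multiple of $|\ddbar^{F}\gamma_{\alpha}|^2_{h^F,\om}\,\om_n$ that can be absorbed into the positive term, leaving a coefficient $C_2\geq0$ in front of it. With $C_1,C_2\geq0$ depending only on $\om$ and $K$, this gives the first inequality in \eqref{eq3}; the second inequality is then immediate, since $C_2\geq0$ and $|\ddbar^{F}\gamma_{\alpha}|^2_{h^F,\om}\geq0$ allow the last term to be dropped.
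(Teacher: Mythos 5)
Your proposal follows exactly the route the paper intends: the paper omits the proof of this theorem, stating only that it is analogous to \cite[Proposition 2.2]{BB:02} and \cite[Proposition 6.2]{BB:03}, and your argument --- local Leibniz expansion of $i\dbar\ddbar(T_{\alpha}\wedge\omega_{q-1})$ with $T_{\alpha}=i^{(n-q)^2}u\wedge\overline{u}\,e^{-\psi}$, the curvature term arising when both derivatives hit the weight, the $|\ddbar^{F}\gamma_{\alpha}|^2$ term extracted from the $\ddbar u$ cross terms using $\ddbar^{F}\alpha=0$ and $\ddbar^{F*}\alpha=0$, and the torsion errors bounded on $K$ by Cauchy--Schwarz with the small $|\ddbar^{F}\gamma_{\alpha}|^2$ contribution absorbed --- is precisely that computation. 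I see no gaps; both the general inequality \eqref{eq3} and the K\"ahler identity \eqref{eq112} come out as you describe.
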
  
 
Based on the same argument as in Theorem \ref{lem1} and the $\ddbar$-closed case in 
\cite[Proposition 2.2]{BB:02}, we have the following equality for K\"{a}hler manifolds, 
which generalizes both the above formula (\ref{eq112}) and the K\"{a}hler case of \cite[Proposition 2.2]{BB:02}. 
For compact K\"{a}hler manifolds, a general formula of this type can be found in \cite{BB:10}.
\begin{cor}\label{cor1} 
Let $(F, h^F)$ be a holomorphic Hermitian
line bundle over a K\"{a}hler manifold $(X,\omega)$. Assume $\alpha \in \Omega^{n,q}(X,F)\cap \Dom(\ddbar^F)\cap\Dom(\ddbar^{F*})$ such that $\ddbar^F \alpha=0$ and $q\geq 1$. Then,
\begin{eqnarray}\label{eq163}
i\dbar\ddbar(T_{\alpha}\wedge\omega_{q-1})
&=&(i\dbar\ddbar T_{\alpha}) \wedge\omega_{q-1} \\ \nonumber
&=& -2 Re\langle \ddbar^F \ddbar^{F*}\alpha,\alpha \rangle_{h^F,{\omega}} \omega_n
+ \langle 2\pi c_1(F, h^F) \wedge\Lambda\alpha,\alpha \rangle_{h^F,{\omega}} \omega_n \\ \nonumber
&&+|\ddbar^{F*} \alpha|_{h^F,{\omega}}^2\omega_n
+|\ddbar^F {\gamma_{\alpha}}|_{h^F,{\omega}}^2\omega_n
\end{eqnarray}
where $\langle\,\cdot\,,\cdot\,\rangle_{h^F,\omega}$ and $|\cdot|_{h^F,\omega}^2$ are the pointwise Hermitian metric and norm on $F$-valued differential forms induced by $\omega$ and $h^F$.
\end{cor}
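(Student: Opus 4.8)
The plan is to reduce the identity to a local $\dbar\ddbar$-computation on the associated $F$-valued $(n-q,0)$-form $\gamma_{\alpha}$, retaining precisely those terms that the harmonicity hypothesis in Theorem \ref{lem1} would have annihilated. First, since $(X,\omega)$ is K\"ahler we have $d\omega=0$, hence $\dbar\omega_{q-1}=\ddbar\omega_{q-1}=0$, and the Leibniz rule gives at once
$$i\dbar\ddbar(T_{\alpha}\wedge\omega_{q-1})=(i\dbar\ddbar T_{\alpha})\wedge\omega_{q-1},$$
which is the first asserted equality; it remains to compute the right-hand side pointwise.

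In a local holomorphic frame $e_F$ with $|e_F|^2_{h^F}=e^{-\psi}$ I would write $T_{\alpha}=i^{(n-q)^2}\{\gamma_{\alpha},\gamma_{\alpha}\}$, where $\{\cdot,\cdot\}$ denotes the sesquilinear pairing combining the wedge of the form parts with $h^F$, and expand using the Chern connection $D=D'+\ddbar^{F}$, with $D'$ its $(1,0)$-part. From the Leibniz identity $\dbar\{\gamma_{\alpha},\gamma_{\alpha}\}=\{D'\gamma_{\alpha},\gamma_{\alpha}\}+(-1)^{n-q}\{\gamma_{\alpha},\ddbar^{F}\gamma_{\alpha}\}$, its conjugate (for $\ddbar$), and the curvature relation $D'\ddbar^{F}+\ddbar^{F}D'=R^{F}$, a careful expansion of $\dbar\ddbar\{\gamma_{\alpha},\gamma_{\alpha}\}$ produces four groups of terms,
$$\{R^{F}\wedge\gamma_{\alpha},\gamma_{\alpha}\},\quad \{\ddbar^{F}\gamma_{\alpha},\ddbar^{F}\gamma_{\alpha}\},\quad \{D'\gamma_{\alpha},D'\gamma_{\alpha}\},\quad -\{\ddbar^{F}D'\gamma_{\alpha},\gamma_{\alpha}\}+\{\gamma_{\alpha},\ddbar^{F}D'\gamma_{\alpha}\},$$
the last pair being mutually conjugate.

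Next I would wedge with $\omega_{q-1}$ and read off the $\omega_n$-coefficient of each group. The curvature group gives $\langle 2\pi c_1(F,h^F)\wedge\Lambda\alpha,\alpha\rangle_{h^F,\omega}\,\omega_n$ via $iR^{F}=2\pi c_1(F,h^F)$ and the pairing identity recorded in Theorem \ref{lem1}, and the group $\{\ddbar^{F}\gamma_{\alpha},\ddbar^{F}\gamma_{\alpha}\}$ gives $|\ddbar^{F}\gamma_{\alpha}|^2_{h^F,\omega}\,\omega_n$. The decisive input is the Hodge star: since $\gamma_{\alpha}=(\star\xi)\otimes e_F$, the operator $\star$ intertwines, as a pointwise isometry, the $(1,0)$-derivative $D'\gamma_{\alpha}$ with the codifferential $\ddbar^{F*}\alpha$, so that $|D'\gamma_{\alpha}|^2_{h^F,\omega}=|\ddbar^{F*}\alpha|^2_{h^F,\omega}$, while the conjugate pair converts into $-2\Re\langle\ddbar^{F}\ddbar^{F*}\alpha,\alpha\rangle_{h^F,\omega}\,\omega_n$, the factor $2$ coming from the two conjugate summands. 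The hypothesis $\ddbar^{F}\alpha=0$ enters exactly here: it guarantees the primitivity making these norm-conversions exact and removes the residual $\ddbar^{F}\alpha$-contributions present in the fully general formula of \cite{BB:10}.

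The main obstacle is precisely this conversion step: keeping the Hodge-star and K\"ahler-identity bookkeeping coherent so that all signs, the powers of $i$, and in particular the factor $2$ in $-2\Re$ come out correctly, and so that $-\{\ddbar^{F}D'\gamma_{\alpha},\gamma_{\alpha}\}+\{\gamma_{\alpha},\ddbar^{F}D'\gamma_{\alpha}\}$ is faithfully identified with $-2\Re\langle\ddbar^{F}\ddbar^{F*}\alpha,\alpha\rangle_{h^F,\omega}\,\omega_n$ rather than its conjugate or its negative. A clean way to verify these pointwise bilinear identities is to compute at a fixed point in K\"ahler normal coordinates with $\psi$ normalized to second order. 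Finally I would check consistency with Theorem \ref{lem1}: imposing $\ddbar^{F*}\alpha=0$ forces $D'\gamma_{\alpha}=0$, which kills both the third group and the conjugate pair and collapses the identity to \eqref{eq112}.
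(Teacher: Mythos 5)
Your proposal is correct and is essentially the argument the paper itself relies on: the paper gives no independent proof of Corollary \ref{cor1}, stating only that it follows ``by the same argument as in Theorem \ref{lem1} and the $\ddbar$-closed case of [BB:02, Proposition 2.2]'', and your sketch is precisely that Berndtsson-style $\dbar\ddbar$-Bochner expansion of $T_\alpha$ via the Chern connection on $\gamma_\alpha$, with the curvature, $|\ddbar^F\gamma_\alpha|^2$, $|D'\gamma_\alpha|^2=|\ddbar^{F*}\alpha|^2$ and conjugate cross terms identified exactly as in the cited source. Your consistency checks (reduction to \eqref{eq112} when $\ddbar^{F*}\alpha=0$, and the role of $\ddbar^F\alpha=0$ in making $\ddbar^F\gamma_\alpha$ primitive) are the right ones.
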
   

           \subsection{Submeanvalue formulas of harmonic forms in $\cH^{n,q}(X,L^k\otimes E)$}\label{submeanvalue}
Let $(L,h^L)$ and $(E,h^E)$ be Hermitian holomorphic line bundles over $X$. For any compact subset $K$ in $X$, the interior of $K$ is denoted by $\mathring{K}$. Let $K_1, K_2$ be compact subsets in $X$, such that $K_1\subset \mathring{K_2}$. Then there exists a constant $c_0=c_0(\omega, K_1, K_2)>0$ such that for any $x_0\in K_1$, the local exponential normal coordinate around $x_0$ is $V\cong W\subset \C^n$, where $$W:=B(c_0):=\{ z\in\C^n: |z|<c_0\}, \quad V:=B(x_0,c_0)\subset \mathring{K_2}\subset K_2,$$ $z(x_0)=0$, and
 $\omega(z)= \sqrt{-1} \sum_{i,j} h_{ij}(z)d{z_i} \wedge d{\overline{z}_j}$ with $h_{ij}(0)=\frac{1}{2}\delta_{ij}$ and $d h_{ij}(0)=0$. 

\begin{lemma}\label{lem2}
Let $(X,\omega)$ be a Hermitian manifold of dimension $n$ and
$(L,h^L)$ and $(E,h^E)$ be Hermitian holomorphic line bundles over $X$. 
Let $K_1$ and $K_2$ be compact subsets in $X$ such that $K_1\subset \mathring{K_2}$. Assume  $L \geq 0$ in $\mathring{K_2}$  and $q\geq 1$. Then there exists a constant $C>0$ depending on $\omega$, $K_1$, $K_2$ and $(E ,h_{E})$, such that
\begin{equation}\label{eq4} 
\int_{|z|<r}|\alpha|_{h_k,\omega}^2 dv_X \leq Cr^{2q}\int_{X}|\alpha|_{h_k,\omega}^2 dv_X
\end{equation}
for any $\alpha\in \cH^{n,q}(X,L^k\otimes E)$ and $0<r<\frac{c_0}{2^n}$, where $|\cdot|_{h_{k},\omega}^2$ is the pointwise Hermitian norm induced by $\omega$, $h^L$ and $h^E$.
\end{lemma}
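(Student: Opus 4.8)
The plan is to feed the Bochner-type inequality of Theorem \ref{lem1} into a Lelong--Jensen monotonicity argument for the positive $(n-q,n-q)$-form $T_\alpha$, the whole point being that the only $k$-dependent term in the Bochner formula is nonnegative and can be discarded.

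First I would apply Theorem \ref{lem1} with $F=L^k\otimes E$ and $h^F=h_k$ on the neighbourhood $\mathring{K_2}$ of $K_1$ on which $L\geq 0$. Since $c_1(L^k\otimes E,h_k)=k\,c_1(L,h^L)+c_1(E,h^E)$, the identity recorded in Theorem \ref{lem1} splits the curvature term as
\begin{equation*}
\langle 2\pi c_1(L^k\otimes E,h_k)\wedge\Lambda\alpha,\alpha\rangle_{h_k,\omega}\,\omega_n
= 2\pi k\,c_1(L,h^L)\wedge T_\alpha\wedge\omega_{q-1}
+ 2\pi\,c_1(E,h^E)\wedge T_\alpha\wedge\omega_{q-1}.
\end{equation*}
By semipositivity $c_1(L,h^L)\geq 0$ on $\mathring{K_2}$, and $T_\alpha\geq 0$, $\omega_{q-1}\geq 0$, so the first summand is a nonnegative $(n,n)$-form and may be dropped; the second is bounded in absolute value by $C|\alpha|_{h_k,\omega}^2\,\omega_n$ with $C$ depending only on $\omega$, $K_2$ and $(E,h^E)$, the curvature of $E$ being bounded on $K_2$. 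Hence Theorem \ref{lem1} gives, on $\mathring{K_2}$, the inequality $i\dbar\ddbar(T_\alpha\wedge\omega_{q-1})\geq -C_1|\alpha|_{h_k,\omega}^2\,\omega_n$ with $C_1$ \emph{independent of $k$}; this $k$-uniformity is the sole source of the $k$-free constant in \eqref{eq4}.

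Next I would translate this into an almost-monotonicity statement. Recall the pointwise identity $T_\alpha\wedge\omega_q=|\alpha|_{h_k,\omega}^2\,\omega_n$ (with $\omega_q=\omega^q/q!$), so that $g(r):=\int_{|z|<r}|\alpha|_{h_k,\omega}^2\,dv_X=\int_{|z|<r}T_\alpha\wedge\omega_q$. In the exponential normal coordinates on $W=B(c_0)$ one has $h_{ij}(0)=\tfrac12\delta_{ij}$ and $dh_{ij}(0)=0$, whence $\omega=\tfrac12\beta+O(|z|^2)$ for $\beta:=i\sum_j dz_j\wedge d\ov{z}_j$; thus $\omega$ and $\beta$ are uniformly comparable on a fixed smaller ball, and $T_\alpha\wedge\omega_q$ and $T_\alpha\wedge\beta_q$ (with $\beta_q:=\beta^q/q!$) differ by a factor controlled independently of $\alpha$ and $k$. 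Fixing $R\approx c_0/2^n$ and using $g(R)\leq\int_X|\alpha|_{h_k,\omega}^2\,dv_X$, the estimate \eqref{eq4} reduces to the scaling bound $g(r)\leq C\,(r/R)^{2q}g(R)$, i.e.\ to the assertion that $r\mapsto r^{-2q}g(r)$ is bounded by a uniform multiple of its value at $R$. To obtain this I would apply the Lelong--Jensen formula to the positive form $T_\alpha$, which has bidimension $q$: for a positive \emph{closed} form of bidimension $q$ the ratio $r^{-2q}\int_{|z|<r}T_\alpha\wedge\beta_q$ is nondecreasing, and the correct exponent $2q$ (rather than the weaker $2$ produced by the bidimension-$1$ form $T_\alpha\wedge\omega_{q-1}$) comes precisely from this bidimension. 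Since $T_\alpha$ is not closed, the formula produces a defect built from $i\dbar\ddbar T_\alpha$; using that $i\dbar\ddbar T_\alpha\wedge\omega_{q-1}$ differs from $i\dbar\ddbar(T_\alpha\wedge\omega_{q-1})$ only by terms made from the bounded torsion of $\omega$ on $K_2$ (these coincide in the Kähler case by \eqref{eq112}), the lower bound just proved controls the defect by $C\,g(r)$, and integrating the resulting differential inequality for $r^{-2q}g(r)$ up to $R$ should yield \eqref{eq4}.

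The main obstacle is the defect estimate in this last step. One must verify that the Lelong--Jensen defect enters with an \emph{integrable} weight, so that it is dominated by $\int_{|z|<R}|\alpha|^2\,dv_X$ rather than by $\int_r^R t^{-1}g(t)\,dt$; the latter would, via Gronwall, degrade the exponent from $2q$ to $2q-C$ and destroy the statement. This forces careful bookkeeping of the $\log|z|$-weight in the Lelong--Jensen kernel, together with control of the two error sources that are absent in the compact Kähler model of \cite{BB:02}: the non-Kähler torsion relating $i\dbar\ddbar(T_\alpha\wedge\omega_{q-1})$ to $i\dbar\ddbar T_\alpha\wedge\omega_{q-1}$, and the discrepancy between $\omega_q$ and $\beta_q$. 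Both are of size $O(|z|^2)$ by the normalization $dh_{ij}(0)=0$ and should contribute only to the absorbable part, but making this quantitative and uniform in $k$ is where the real work lies.
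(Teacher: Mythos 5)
Your first step coincides with the paper's: apply Theorem \ref{lem1} with $F=L^k\otimes E$, discard the term $2\pi k\,c_1(L,h^L)\wedge T_\alpha\wedge\omega_{q-1}\geq 0$ using $L\geq 0$, $T_\alpha\geq 0$, $\omega_{q-1}\geq 0$, and bound the $E$-curvature term on $K_2$; this yields $i\dbar\ddbar(T_\alpha\wedge\omega_{q-1})\geq -C_4|\alpha|_{h_k,\omega}^2\omega_n$ with $C_4$ independent of $k$, and you are right that this uniformity is the whole point. From there, however, you diverge from the paper and, more importantly, you misdiagnose where the exponent $2q$ comes from. The paper stays with the bidimension-$(1,1)$ form $T_\alpha\wedge\omega_{q-1}$: it integrates the Bochner inequality against the compactly supported weight $t^2-|z|^2$ (so Stokes produces no singular kernel and no boundary contribution from the weight), obtaining
$2\int_{|z|<t}T_\alpha\wedge\omega_{q-1}\wedge\beta\leq\int_{|z|=t}-iT_\alpha\wedge\omega_{q-1}\wedge\dbar|z|^2+C_4t^2\sigma(t)$. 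The factor $q$ is then gained \emph{pointwise}: the bulk trace satisfies $T_\alpha\wedge\omega_{q-1}\wedge\beta\approx q\,|\alpha|^2\omega_n$, whereas the boundary density obeys $-iT_\alpha\wedge\beta_{q-1}\wedge\dbar|z|^2\leq t\,|\alpha|^2\,dS$ \emph{without} a factor $q$ (inequality \eqref{eq172}). This mismatch gives $2q\sigma(t)\lesssim t\sigma'(t)+C_4t^2\sigma(t)$, hence $\sigma(r)\lesssim r^{2q}$ after integrating (the paper does this by an induction on the exponent). So your claim that the bidimension-$(1,1)$ route ``produces the weaker exponent $2$'' is incorrect, and it is what pushes you toward a harder alternative.

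That alternative --- Lelong--Jensen monotonicity for $T_\alpha$ itself as a positive form of bidimension $(q,q)$ --- has a genuine gap that your closing paragraph does not identify. For a non-closed $T_\alpha$, the comparison of $r^{-2q}\int_{|z|<r}T_\alpha\wedge\beta_q$ at two radii produces defect terms in which $dT_\alpha$ and $i\dbar\ddbar T_\alpha$ are wedged against powers of $dd^c\log|z|$, i.e.\ against singular, \emph{degenerate} positive $(j,j)$-forms that are not comparable to $\omega_j$. The only analytic input available is a one-sided bound on the single trace $i\dbar\ddbar(T_\alpha\wedge\omega_{q-1})$; since $i\dbar\ddbar T_\alpha$ is not a positive $(n-q+1,n-q+1)$-form, a lower bound on its pairing with $\omega_{q-1}$ gives no control on its pairing with $(dd^c\log|z|)^{q-1}$, and nothing at all is known about $dT_\alpha$. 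The difficulties you do list (the $\log$-weight integrability, the torsion discrepancy between $i\dbar\ddbar(T_\alpha\wedge\omega_{q-1})$ and $(i\dbar\ddbar T_\alpha)\wedge\omega_{q-1}$, and $\omega_q$ versus $\beta_q$) are real but secondary; the unaddressed obstruction is that the Lelong--Jensen defect simply is not a quantity your Bochner inequality sees. The paper's choice of the form $T_\alpha\wedge\omega_{q-1}$ and the weight $t^2-|z|^2$ is precisely what avoids this.
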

 
\begin{proof}
For simplifying notations, we denote by $\langle \cdot, \cdot \rangle_h$ and $|\cdot|_h$ 
the associated pointwise Hermitian metrics and norms here, and their meaning will be clear from the context. 
For $ 0<t<c_0 $, define $\sigma (t):=\int _{|z|<t}|\alpha|_h^{2}\omega_n = \int_{|z|<t} T_{\alpha}\wedge\omega_q$.
Assume $\|\alpha\|^2_{L^2}:=\int_{X}|\alpha|_h^2 \omega_n=1$. Then this lemma says that: 
There exists a constant $C$, which is independent of the point $x_0$ and $k$ in $L^k\otimes E$, such that
\begin{equation}\label{eq11}
\sigma(r)\leq C r^{2q},
\end{equation}
when $0<r<c_0/2^n$ (eventually we will use the special case $r=\frac{2}{\sqrt{k}}$ as $k\rightarrow \infty$).

From the Theorem \ref{lem1} for $F=L^k\otimes E$, there exist $C_3=C_3(\omega, K_2, E, h_{E})\geq 0$ such that
\begin{eqnarray*}
\langle c_1(F, h^F)\wedge \Lambda \alpha, \alpha \rangle_h \omega_n
&=&c_1(F, h^F)\wedge T_{\alpha}\wedge \omega_{q-1}
=(kc_1(L, h^L)+c_1(E, h^E))\wedge T_{\alpha}\wedge\omega_{q-1} \\
&\geq& c_1(E, h^E)\wedge T_{\alpha}\wedge \omega_{q-1}
=\langle c_1(E, h^E)\wedge \Lambda \alpha, \alpha \rangle_h \omega_n  \\
&\geq& -C_3|\alpha|_h^{2}\omega_{n}
\end{eqnarray*}
on $\mathring{K_2}$, since $L\geq 0$, $T_{\alpha}\geq 0$ and $\omega$ is positve Hermitian (1,1)-form on $\mathring{K_2}$. Thus over $\mathring{K_2}$, (\ref{eq3}) becomes
\begin{equation}\label{eq5}
i\dbar\ddbar(T_{\alpha}\wedge \omega_{q-1})
\geq -C_4|\alpha|_h^{2}\omega_n
\end{equation} 
 where $C_4=C_4(\omega,K_2,E,h_{E})\geq 0$. Then 
\begin{eqnarray}\label{eq6}
\int_{|z|<t}(t^2-|z|^2)i\dbar\ddbar(T_{\alpha} \wedge \omega_{q-1})
&\geq&-C_4t^2\sigma(t).
\end{eqnarray} 
Denote $\beta:=\frac{\sqrt{-1}}{2}\dbar\ddbar|z|^2=\frac{\sqrt{-1}}{2}\sum_j dz_j\wedge d\overline{z}_j$, and apply Stokes' formula to the left hand side of (\ref{eq6}), that is
\begin{eqnarray*}
\int_{|z|<t}(t^2-|z|^2)i\dbar\ddbar(T_{\alpha} \wedge \omega_{q-1})
=\int_{|z|<t}\dbar|z|^2\wedge i\ddbar(T_{\alpha} \wedge \omega_{q-1}).
\end{eqnarray*}
Thus 
\begin{eqnarray}\label{eq7}
 2\int_{|z|<t}T_{\alpha}\wedge\omega_{q-1}\wedge \beta
\leq \int_{|z|=t}-i T_{\alpha}\wedge\omega_{q-1}\wedge \dbar|z|^2 
 +C_4 t^2\sigma(t). 
\end{eqnarray}
By the choice of exponential normal coordinates, 
\begin{equation}\label{eq147}
h_{ij}(z)=\frac{1}{2}\delta_{ij}+O(|z|^2).
\end{equation}
for any $z\in B(c_0)$.
In particular, for $|z|=t$ with $0\leq t <c_0$, we can approximate the metric $\omega$ on $X$ by the standard one $\beta$ on $\C^n$ in the following sense
\begin{eqnarray}\label{eq148}
(1-R_1(t))\beta\leq\omega(z)\leq(1+R_1(t))\beta 
\end{eqnarray}
by the smoothness of $\omega$, where $R_1(t)\geq 0$ and $R_1(t)=O(t^2)$ as $t\rightarrow 0$. (Trivially if $X=\C^n$ then $R_1(t)=0$)
Hence
\begin{eqnarray}\label{eq8}
 T_{\alpha}\wedge\omega_{q-1}\wedge\beta
=q (1-R_1(t)) |\alpha|_h^2 \omega_n.
\end{eqnarray} 
Notice $(1+R_1(t))^{-n}\leq\frac{\omega^n}{\beta^n}\leq (1+R_1(t))^n$ and 
\begin{equation}\label{eq172}
\int_{|z|=t}-i T_{\alpha}\wedge\beta_{q-1}\wedge \dbar|z|^2 \leq t\int_{|z|=t}|\alpha|_h^2 dS
\end{equation}
where $dS$ is the surface measure. Then (\ref{eq148}), (\ref{eq8}) and (\ref{eq172}) imply
\begin{eqnarray}\label{eq9}
\int_{|z|=t}-i T_{\alpha}\wedge\omega_{q-1}\wedge \dbar|z|^2 
\leq t(1+R_2(t)) \sigma'(t) 
\end{eqnarray}
where $\sigma'(t)=\int_{|z|=t}|\alpha|_h^2(\omega_n/\beta_n)dS$ by the definition of $\sigma(t)$, $R_2(t)\geq 0$ and $R_2(t)=O(t^2)$.
Combining (\ref{eq7}),(\ref{eq8}) and (\ref{eq9}), we have 
\begin{eqnarray*}
2q (1-R_1(t))\sigma(t)
\leq t(1+R_2(t)) \sigma'(t)+C_4 t^2\sigma(t), 
\end{eqnarray*}
that is,  $2q (1-R_3(t))\sigma(t)\leq t(1+R_2(t))\sigma'(t)$ for any $0\leq t<c_0$, where $R_3(t)\geq 0$ and $R_3(t)=O(t^2)$. 
Substituting $s(t)^2:=\sigma(t)\geq 0$ and dividing by $2ts(t)$, we obtain
\begin{equation}\label{eq10}
q(\frac{1}{t}-R_4(t))s(t)\leq(1+R_2(t)) s'(t)
\end{equation} 
for $q\geq 1$ and any $0\leq t<c_0$, where $R_4(t)\geq 0$ and $R_4(t)=O(t)$.

Now we only need to prove: There exists $C\geq 0$, such that for any $1\leq q\leq n$ and $0\leq t< \frac{c_0}{2^n}$,
\begin{equation}\label{eq12}
s(t)\leq Ct^q,
\end{equation}
which is equivalent to (\ref{eq11}). Next we fix $q\geq 1$, so we only need to prove that $s(t)\leq Ct^m$ for any $0\leq m\leq q$ and $0\leq t\leq c_0/2^q$ by induction over $m$.
Firstly, for $m=0$, $s^2(t):=\sigma(t):=\int_{|z|<t}|\alpha|_h^2\omega_n\leq 1$ for $0\leq t\leq c_0$. Secondly, assume there exists a constant $C_5>0$ such that $s(t)\leq C_5t^m$ for $0\leq m<q$ and $0\leq t\leq c_0/2^m$. Thirdly, in particular, we consider $1\leq m+1=q$ for $0\leq t < c_0/2^{m+1}$, and thus (\ref{eq10}) becomes 
\begin{equation}\label{eq13}
s'(t)-(m+1)\frac{s(t)}{t}\geq -R_2(t)s'(t)-(m+1)R_4(t)s(t).
\end{equation}   
By the second step of the induction, $s(t)=O(t^m)$ with $s(t)\geq 0$ and $s'(t)=O(t^{m-1})$ with $s'(t)\geq 0$.
And according to (\ref{eq13}), for $0\leq t < c_0/2^{m+1}$,
\begin{eqnarray}\label{eq164}
\left(\frac{s(t)}{t^{m+1}}\right)'=\frac{1}{t^{m+1}}\left[s'(t)-\frac{m+1}{t}s(t)\right]
\geq -R_5(t),
\end{eqnarray}
where $R_5(t)\geq 0$ and $R_5(t)=O(1)$. Integral (\ref{eq164}) from $r$ to $c_0/2^{m+1}$, then
\begin{eqnarray}\label{eq165}
\frac{s(\frac{c_0}{2^{m+1}})}{{(\frac{c_0}{2^{m+1}}})^{m+1}}-\frac{s(r)}{r^{m+1}}
=\int_r^{\frac{c_0}{2^{m+1}}}(\frac{s(t)}{t^{m+1}})'dt
\geq -C_6(m+1,\omega,K_1,K_2,E,h_{E}).
\end{eqnarray}
Finally, for the fixed $q\geq 1$ and any $0\leq r \leq c_0/2^{q}$, we have
\begin{eqnarray}\label{eq166}
\frac{s(r)}{r^q}&\leq& C_5\frac{2^{q}}{c_0}+C_6(q,\omega,K_1,K_2,E,h_{E}).
\end{eqnarray}
Let $q$ run over $\{1,...,n\}$ in (\ref{eq166}). Then there exists $C=C(\omega,K_1,K_2,E,h_{E})\geq 0$ such that
(\ref{eq12}) verifies and also (\ref{eq4}) and (\ref{eq11}).
\end{proof} 
   
We will consider the following trivialization of holomorphic Hermitian line bundles in local charts. 
For any $x_0\in K_1 \subset \mathring{K_2}$, we  fix the exponential normal coordinates on 
$V\cong W\subset \C^n$ as before such that 
$$\omega(x_0)=\beta:=\frac{\sqrt{-1}}{2}\sum d{z_j} \wedge d{\overline{z}_j},$$ 
which is the standard metric on $\C^n$. Let $L\geq 0$ on $\mathring{K_2}$. 
Then we can choose the trivialization of $L$ and $E$ over $V$ such that for any $z\in B(c_0)$, 
$|e_{L}(z)|_{h^L}^2=e^{-\phi(z)}$ and $|e_{E}(z)|_{h^E}^2=e^{-\varphi(z)}$ satisfying 
\begin{equation}\label{eq167}
\phi(z)=\sum \lambda_{i}|z_{i}|^2+O(|z|^{3}), \quad \varphi(z)=\sum \mu_{i}|z_{i}|^2+O(|z|^{3})
\end{equation}
and $\lambda_{i}=\lambda_{i}(x_0)\geq 0$. The induced Hermitian metric on 
$F:=L^{ k}\otimes E$ is given by  $|e_{F}(z)|_{h^F}^2=e^{-\psi(z)}$ with 
\begin{equation}\label{eq169}
\psi(z):=k\phi(z)+\varphi(z).
\end{equation}
The quadratic part of $\phi$ is denoted by 
\begin{equation}\label{eq170}
\phi_{0}(z):=\sum \lambda_{i}|z_{i}|^2.
\end{equation}

Assume $\alpha\in \Omega^{p,q}(X,F)$, then it has the form $\alpha=\xi\otimes e_{F}$  around $x_0\in K_1$ where 
$\xi=\sum f_{IJ}dz_I\wedge d\overline{z}_J$
 is a local $(p,q)$-form and $f_{IJ}$ are smooth functions on $W\subset \C^n$. The scaled 
 functions and sections with respect to $k\in \N$ are defined by 
 \begin{equation}\label{eq171}
 \psi^{(k)}(z):=\psi(z/\sqrt{k}),\quad e_{F}^{(k)}(z):=e_{F}(z/\sqrt{k})\,,
 \:\:\text{for $z\in\sqrt{k}W=B(\sqrt{k}c_0)$},
 \end{equation}
 hence $|e^{(k)}_{F}|_{h^F}^2=e^{-\psi^{(k)}}$. 
 The scaled forms are defined for $z\in\sqrt{k}W$ by 
\begin{equation}\label{eq149}
\begin{split}
&\omega^{(k)}(z):=\sqrt{-1}\sum h^{(k)}_{ij}(z) d{z_j} \wedge d{\overline{z_j}}
:=\sqrt{-1}\sum h_{ij}(z/\sqrt{k}) d{z_j} \wedge d{\overline{z_j}}, \\ 
&\xi^{(k)}(z):=f^{(k)}_{IJ}(z)dz_I\wedge d\overline{z_J}:=f_{IJ}(z/\sqrt{k})dz_I\wedge d\overline{z_J}, \\ 
&\alpha^{(k)}(z):=\xi^{(k)}(z)\otimes e_{F}^{(k)}(z)\,.
\end{split}
\end{equation} 
 
\begin{lemma}\label{lem3}
Let $(X,\omega)$ be a Hermitian manifold of dimension $n$ and
$(L,h^L)$ and $(E,h^E)$ be Hermitian holomorphic line bundles over $X$. 
Let $K_1$ and $K_2$ be compact subsets in $X$ such that $K_1\subset \mathring{K_2}$. 
Assume  $L \geq 0$ on $\mathring{K_2}$  and $q\geq 1$.  
Then there exists a constant $C>0$ depending on $\omega$, $K_1$, $K_2$, $(L, h^L)$ and $(E ,h_{E})$, such that 
\begin{equation}\label{eq161}
|\alpha(x_0)|_{h_k,\omega}^2\leq Ck^n \int_{|z|<\frac{2}{\sqrt{k}}}|\alpha|_{h_k,\omega}^2 dv_X.
\end{equation}
for any $x_0\in K_1$, $\alpha\in\cH^{n,q}(X,L^k\otimes E)$ and $k$ sufficiently large, 
where $|\cdot|_{h_{k},\omega}^2$ is the pointwise Hermitian norm induced by $\omega$, $h^L$ and $h^E$.
\end{lemma}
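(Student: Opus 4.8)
The plan is to reduce the pointwise bound \eqref{eq161} to a single \emph{scale-invariant} sub-mean value inequality for the rescaled forms $\alpha^{(k)}$ of \eqref{eq149}, and then to deduce the latter from interior elliptic estimates that are uniform in $k$. First I would record how the two sides of \eqref{eq161} behave under the dilation $z\mapsto z/\sqrt k$. Since the pointwise norm is an algebraic expression in the values of $\omega$, of $h_k$ (i.e.\ of the weight $\psi$) and of the coefficients $f_{IJ}$, the very definitions in \eqref{eq171}--\eqref{eq149} give the pointwise identity $|\alpha^{(k)}(w)|^2_{h^F,\omega^{(k)}}=|\alpha(w/\sqrt k)|^2_{h_k,\omega}$, and in particular $|\alpha^{(k)}(0)|^2=|\alpha(x_0)|^2_{h_k,\omega}$. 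Changing variables $z=w/\sqrt k$ in the volume integral (the real Jacobian supplies the factor $k^{-n}$, while the density $\det(h_{ij})$ is carried along consistently) yields
\[
\int_{|z|<2/\sqrt k}|\alpha|^2_{h_k,\omega}\,dv_X \;=\; k^{-n}\int_{|w|<2}|\alpha^{(k)}|^2_{h^F,\omega^{(k)}}\,dv^{(k)},
\]
where $dv^{(k)}$ denotes the volume form of $\omega^{(k)}$. Thus \eqref{eq161} is \emph{equivalent} to the estimate $|\alpha^{(k)}(0)|^2\le C\int_{|w|<2}|\alpha^{(k)}|^2\,dv^{(k)}$ with $C$ independent of $k$ and of $x_0$.

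Next I would show that $\alpha^{(k)}$ solves a uniformly elliptic second-order equation whose coefficients are bounded in $C^\infty(\{|w|\le 2\})$ uniformly in $k$. Because $\alpha\in\cH^{n,q}(X,L^k\otimes E)=\Ker(\square)$, the identity $0=\langle\square\alpha,\alpha\rangle=\|\ddbar^F\alpha\|^2+\|\ddbar^{F*}\alpha\|^2$ gives $\ddbar^F\alpha=0$ and $\ddbar^{F*}\alpha=0$ separately, both smooth by elliptic regularity. The equation $\ddbar^F\alpha=0$ transforms cleanly into $\ddbar\alpha^{(k)}=0$, since $\ddbar$ only differentiates the coefficients and the resulting $k^{-1/2}$ factor multiplies zero. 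For $\ddbar^{F*}\alpha=0$ I would use that $\ddbar^{F*}$ is the formal adjoint of $\ddbar$ for the weighted inner product with weight $e^{-\psi}$, whose first-order coefficients are built from $\omega$, its first derivatives, and the first derivatives of $\psi$. The only a priori dangerous term is the factor $k$ in $\psi=k\phi+\varphi$ from \eqref{eq169}; however, by the normalizations \eqref{eq167} and \eqref{eq170} one computes $k\phi(w/\sqrt k)=\phi_0(w)+O(|w|^3/\sqrt k)$ and $\varphi(w/\sqrt k)=O(1/k)$, so that the scaled weight satisfies $\psi^{(k)}\to\phi_0$ in $C^\infty(\{|w|\le 2\})$ with all derivatives uniformly bounded, while $\omega^{(k)}\to\beta$ by \eqref{eq147}. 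Hence every occurrence of the large parameter is exactly absorbed by the $\sqrt k$-scaling, the rescaled codifferential $\ddbar^{*,(k)}$ has uniformly bounded smooth coefficients, and $\alpha^{(k)}$ satisfies $\ddbar^{*,(k)}\alpha^{(k)}=0$. Consequently $\square^{(k)}\alpha^{(k)}=0$, where $\square^{(k)}:=\ddbar\,\ddbar^{*,(k)}+\ddbar^{*,(k)}\ddbar$ is uniformly elliptic (its principal symbol comes from $\omega^{(k)}\to\beta$) with uniformly bounded coefficients on $|w|\le 2$; note that for $k$ large the domain $B(\sqrt k c_0)$ of $\alpha^{(k)}$ contains $\{|w|\le 2\}$, so this is an interior statement.

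Finally, I would invoke interior a priori estimates for this uniformly elliptic family. Local G\aa rding inequalities together with elliptic bootstrapping bound $\|\alpha^{(k)}\|_{W^{m,2}(|w|\le 1)}$ by $\|\alpha^{(k)}\|_{L^2(|w|\le 2)}$ uniformly in $k$; taking $m>n$ and using the Sobolev embedding $W^{m,2}\hookrightarrow C^0$ gives $|\alpha^{(k)}(0)|^2\le C\|\alpha^{(k)}\|^2_{L^2(|w|<2)}$ with $C$ independent of $k$. Combining this with the scaling identities of the first paragraph yields precisely \eqref{eq161}. The main obstacle is the second step: one must verify that \emph{all} of the $k$-dependence --- in the bundle metric $h_k$, in the first-order coefficients of $\ddbar^{F*}$, and in the zeroth-order curvature term of $\square_k$ --- is cancelled by the dilation, so that $\{\square^{(k)}\}$ is a genuinely uniformly elliptic family with coefficients controlled in $C^\infty$; the semipositivity $\lambda_i=\lambda_i(x_0)\ge 0$ in \eqref{eq167} is exactly what keeps the limiting weight $\phi_0$ and the rescaled data under control as $k\to\infty$ and as $x_0$ ranges over the compact set $K_1$.
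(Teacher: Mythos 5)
Your proposal is correct and follows essentially the same route as the paper: rescale by $\sqrt{k}$, verify that the scaled weight $\psi^{(k)}$ and metric $\omega^{(k)}$ converge in $C^\infty$ on $B(2)$ to the model data $(\phi_0,\beta)$ so that $\alpha^{(k)}$ is harmonic for a uniformly controlled scaled Laplacian $\square^{(k)}$, and then conclude by a local sup-over-$L^2$ estimate together with the $k^{-n}$ change-of-variables identity and compactness of $K_1$. The only difference is that where you rederive the key estimate $\sup_{B(1)}|\alpha^{(k)}|^2\leq C\|\alpha^{(k)}\|^2_{L^2(B(2))}$ from uniform interior elliptic estimates and Sobolev embedding, the paper simply cites Berman's sub-mean-value lemma \cite[Lemma 3.1]{RB:04}, which encapsulates the same G{\aa}rding-type argument.
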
   
\begin{proof}
Denote a ball centred at the origin in $\C^n$ with radius $r>0$ by $B(r):=\{z\in\C^n: |z|<r\}$. 
Then, $B(r)$ is a subset of $W\cong V\subset X$ via the local chart, when $0<r\leq c_0$. 
Thus the right side of (\ref{eq161}) is well defined, when $k$ is large enough. 
Let $r_k:=\frac{\log k}{\sqrt{k}}$ for $ k\in \N$. Then $0\leq r_k \leq 1$ and $r_k\rightarrow 0$ as $k\rightarrow \infty$. 

Under the local representation of forms valued in $F:=L^k\otimes E$, the Kodaira Laplacian 
can be represented by $\square=\ddbar\ddbar^{*}_{\psi}+\ddbar^{*}_{\psi}\ddbar$ 
locally over $B(\frac{\log k}{\sqrt{k}})$ for  $k$ large enough, where $\ddbar^{*}_{\psi}=\ddbar^{F*}$. 
Then the scaled Laplacian 
\begin{equation} \label{eq151}
\square^{(k)}:=\ddbar\ddbar^{*}_{\psi^{(k)}}+\ddbar^{*}_{\psi^{(k)}}\ddbar
\end{equation}
is well defined on $B({\log k})$ for $k$ large enough. 
 
Under our assumptions, we consider the harmonic $L^k\otimes E$-valued $(n,q)$-form 
$\alpha=\alpha\mid_{B(\frac{\log k}{\sqrt{k}})}$ on $B(\frac{\log k}{\sqrt{k}})$ 
for $k$ large enough, then the rescaled $\alpha^{(k)}$ is a $L^k\otimes E$-valued $(n,q)$-forms 
on $B({\log k})$ by (\ref{eq149}). By definitions,  
\begin{equation}\label{eq152}
\square^{(k)}\alpha^{(k)}=\frac{1}{k}(\square \alpha)^{(k)}=0
\end{equation}
over $B({\log k}).$ That is, the scaled forms are still harmonic with respect to the scaled Laplacian.
 
Next we introduce the following $L^2$-norms,
$$ \|\cdot\|^2_{B(\frac{2}{\sqrt{k}})}:=
\int_{B(\frac{2}{\sqrt{k}})}|\cdot|^2_{\omega} e^{-\psi}\omega_n \quad \mbox{and} \quad
  \|\cdot\|^2_{\phi_0,B(2)}:=\int_{B(2)}|\cdot|^2_{\beta}e^{-\phi_0}\beta_n.$$ 
  We claim that there exist $C(k)>0$ bounded above and below for $k$ large enough 
  (in fact, $C(k)\rightarrow 1$ as $k\rightarrow\infty$) such that
 \begin{equation}\label{eq158}
  \|\alpha^{(k)}\|^2_{\phi_0,B(2)}=C(k)k^n \|\alpha\|^2_{B({\frac{2}{\sqrt{k}}})}.
 \end{equation} 
 In fact, by (\ref{eq167})\textendash(\ref{eq171}),  $\psi^{(k)}(z)-\phi_0{(z)}=\frac{O(|z|^2)}{\sqrt{k}}$ and thus 
 \begin{equation}\label{eq150}
 \lim_{k\rightarrow \infty}\sup_{|z|<\log k}|\dbar^N (\psi^{(k)}-\phi_{0})(z)|=0,
 \end{equation}
 which means the scaled Hermitian metric on $L^k\otimes E$ convergences to a model metric on $B({\log k})$ with all derivatives. In particular, as $k\rightarrow\infty$,  $\psi^{(k)}(z)\rightarrow\phi_0(z)$ uniformly over $B({\log k})$, and also $\omega^{(k)}(z)\rightarrow \beta$. Then (\ref{eq158}) follows by
$$\|\alpha^{(k)}\|^2_{\phi_0,B(2)}=\int_{B(2)}|\xi^{(k)}(z)|^2_{\beta}e^{-\phi_0(z)}\beta_n~~,~
k^n \|\alpha\|^2_{B({\frac{2}{\sqrt{k}}})} 
=\int_{B(2)}|\xi^{(k)}(z)|^2_{\omega^{(k)}(z)}e^{-\psi^{(k)}(z)}\omega^{(k)}_n(z).$$
Finally, we apply \cite[Lemma 3.1]{RB:04} and identify $\alpha^{(k)}$ with a form in $L^2(\C^n,\phi_0)$ by extending with zero outside $B({\log k})$. Then there exists a constant $C_1>0$ independent of $k$ such that 
\begin{equation}\label{eq160}
\sup_{z\in B(1)}|\alpha^{(k)}(z)|^2_{\beta,\phi_0}\leq C_1\|\alpha^{(k)}\|^2_{\phi_0,B(2)}
\end{equation} 
for $k$ large enough, where $| \cdot|^2_{\beta,\phi_0}:=|\cdot|^2_{\beta}e^{-\phi_0}$.
Combining (\ref{eq158}) and (\ref{eq160}), we get $ |\alpha(x_0)|_{h_k,\omega}^2
  = |\alpha^{(k)}(0)|_{\beta,\phi_0}^2
  \leq 2C_1k^n \|\alpha\|^2_{B({\frac{2}{\sqrt{k}}})}$ for $k$ large enough. Notice that 
here $C_1$ works for all points sufficiently close to $x_0$ by continuity. 
That is, there exists a constant $C_1>0$ and a neighbourhood $B(x_0,\epsilon)$ of $x_0$, such that 
$ |\alpha(x)|_{h_k,\omega}^2\leq 2C_1 k^n \|\alpha\|^2_{B({\frac{2}{\sqrt{k}}})}$ for any 
$x\in B(x_0,\epsilon)$ and $k$ large enough. 
Since $K_1$ is compact, there exists a uniform constant $C>0$ which works for all $x\in K_1$, and (\ref{eq161}) follows. 
\end{proof}  
 
To summarize, we have a local estimate of the pointwise norm of harmonic forms valued in 
semipositive line bundles, which is equivalent to Theorem \ref{thm1}. We define 
\[
S_k^q(x):=\sup\left\{ \frac{|\alpha(x)|_{h_k,\omega}^2}{||\alpha||^{2}_{L^2}}: 
\alpha\in \cH^{n,q}(X,L^{ k}\otimes E)  \right\}, 
\]
where $(L,h^L)$ and $(E,h^E)$ are holomorphic Hermitian line bundles over a Hermitian manifold $(X, \omega)$ as before.
 
\begin{thm}\label{thm3}
Let $(X,\omega)$ be a Hermitian manifold of dimension $n$ and
$(L,h^L)$ and $(E,h^E)$ be Hermitian holomorphic line bundles over $X$. 
Let $K_1$ and $K_2$ be compact subsets in $X$ such that $K_1\subset \mathring{K_2}$. Assume  $L \geq 0$ on $\mathring{K_2}$  and $q\geq 1$.
Then there exists $C>0$ depending on $\omega$, $K_1$, $K_2$, $(L, h^L)$ and $(E ,h^{E})$ such that 
\begin{equation}\label{eq162}
S_k^q(x) \leq C k^{n-q}\,.
\end{equation}
for any $x\in K_1$ and $k\geq 1$.
\end{thm}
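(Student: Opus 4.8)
The plan is to combine the two submeanvalue results just proved, Lemma \ref{lem2} and Lemma \ref{lem3}, which chain together to give exactly the desired decay in $k$. The key observation is that Lemma \ref{lem3} provides a pointwise bound on $|\alpha(x_0)|^2_{h_k,\omega}$ in terms of the $L^2$-mass of $\alpha$ on the small ball $B(2/\sqrt{k})$, with a factor $k^n$, while Lemma \ref{lem2} shows that this local $L^2$-mass is itself small: taking the radius $r=2/\sqrt{k}$ in \eqref{eq4} contributes a factor $r^{2q}=(2/\sqrt{k})^{2q}=4^q k^{-q}$. Multiplying the two estimates, the $k^n$ from Lemma \ref{lem3} and the $k^{-q}$ from Lemma \ref{lem2} combine to produce precisely $k^{n-q}$, which is the asserted growth rate in \eqref{eq162}.

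Concretely, I would fix $x_0\in K_1$ and an arbitrary $\alpha\in\cH^{n,q}(X,L^k\otimes E)$ with $\|\alpha\|^2_{L^2}=1$ (the general case follows by homogeneity, since both sides of \eqref{eq162} scale quadratically in $\alpha$). For $k$ large enough that $2/\sqrt{k}<c_0/2^n$, Lemma \ref{lem3} gives a constant $C_a>0$, uniform over $x_0\in K_1$, with
\begin{equation*}
|\alpha(x_0)|^2_{h_k,\omega}\leq C_a k^n\int_{|z|<2/\sqrt{k}}|\alpha|^2_{h_k,\omega}\,dv_X.
\end{equation*}
Applying Lemma \ref{lem2} with $r=2/\sqrt{k}$ to the integral on the right, there is a constant $C_b>0$ with
\begin{equation*}
\int_{|z|<2/\sqrt{k}}|\alpha|^2_{h_k,\omega}\,dv_X\leq C_b\Big(\frac{2}{\sqrt{k}}\Big)^{2q}\int_X|\alpha|^2_{h_k,\omega}\,dv_X= C_b\,4^q k^{-q}.
\end{equation*}
Combining the two displays yields $|\alpha(x_0)|^2_{h_k,\omega}\leq C_a C_b 4^q k^{n-q}$, and since $q$ ranges only over $\{1,\dots,n\}$ the factor $4^q$ is absorbed into a single constant $C$. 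Taking the supremum over all such $\alpha$ gives $S_k^q(x_0)\leq C k^{n-q}$ for all $x_0\in K_1$ and all large $k$; the finitely many remaining small values of $k$ are handled by enlarging $C$.

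The main obstacle I anticipate is bookkeeping rather than conceptual: the two lemmas are stated for $k$ sufficiently large (Lemma \ref{lem3} explicitly, and Lemma \ref{lem2} requires $2/\sqrt{k}<c_0/2^n$), whereas Theorem \ref{thm3} asserts the bound for \emph{all} $k\geq 1$. I would therefore argue the estimate for $k\geq k_0$ with $k_0$ chosen so both lemmas apply, and then note that for each of the finitely many $k<k_0$ the quantity $S_k^q(x)$ is a continuous function of $x$ that is finite on the compact set $K_1$—finiteness following from the fact that harmonic forms are smooth and the unit sphere in each finite-energy direction behaves controllably—so that enlarging $C$ covers these cases uniformly. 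The only genuine care needed is to ensure that the constants produced by Lemma \ref{lem2} and Lemma \ref{lem3} are uniform in $x_0\in K_1$ (which both lemmas guarantee by their compactness arguments) and independent of the particular harmonic form $\alpha$, so that the supremum defining $S_k^q$ inherits the bound.
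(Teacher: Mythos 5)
Your proposal is correct and follows exactly the paper's own argument: the paper proves Theorem \ref{thm3} by combining the pointwise estimate \eqref{eq161} of Lemma \ref{lem3} with the case $r=\tfrac{2}{\sqrt{k}}$ of the submeanvalue inequality \eqref{eq4} from Lemma \ref{lem2}, yielding the factor $k^n\cdot k^{-q}=k^{n-q}$. Your additional remarks on uniformity of the constants over $K_1$ and on absorbing the finitely many small values of $k$ into the constant are sensible bookkeeping that the paper leaves implicit.
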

  
\begin{proof}
Combine (\ref{eq161}) and the case $r=\frac{2}{\sqrt{k}}$ of (\ref{eq4}).
\end{proof}
  
\begin{rem}
In particular, when $X$ is compact without boundary, and $K_1=K_2=X$, then (\ref{eq162}) implies the case of $\lambda=0$ in \cite[Theorem 2.3]{BB:02}.
\end{rem}

               \section{Proof of the main results and a corollary}\label{proof}

At first, the following lemma is clear by the definitions of $S^q_k(x)$ and $B^q_k(x)$ in Theorem \ref{thm3} and Theorem \ref{thm1}. 
\begin{lemma}\label{lem4}
$S^q_k(x)\leq B^q_k(x)\leq C_{n}^q S^q_k(x)$ on $X$.
\end{lemma}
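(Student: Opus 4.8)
The plan is to regard both $S^q_k(x)$ and $B^q_k(x)$ as quantities living on the single finite-dimensional Hermitian space $W_x:=(\Lambda^{n,q}T^*_xX)\otimes(L^k\otimes E)_x$, the fibre at $x$ of the bundle of $(n,q)$-forms with values in $L^k\otimes E$, whose complex dimension is exactly $C_n^q=\binom nq$ (the rank of $\Lambda^{n,q}T^*X$). The crucial structural fact is that the evaluation map $\mathrm{ev}_x\colon\cH^{n,q}(X,L^k\otimes E)\to W_x$, $\alpha\mapsto\alpha(x)$, is bounded: this is guaranteed by Theorem \ref{thm3} (equivalently by the submeanvalue Lemmas \ref{lem2} and \ref{lem3}), which gives $S^q_k(x)<\infty$. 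Boundedness is what makes $B^q_k(x)$ well defined and lets the Riesz representation argument below go through.

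For the left inequality $S^q_k(x)\leq B^q_k(x)$, I would fix $\alpha\in\cH^{n,q}(X,L^k\otimes E)$ with $\|\alpha\|_{L^2}=1$ and expand it in the orthonormal basis, $\alpha=\sum_j c_j s^k_j$ with $\sum_j|c_j|^2=1$. Applying the triangle inequality for the pointwise norm $|\cdot|_{h_k,\omega}$ on the fibre $W_x$ followed by the Cauchy--Schwarz inequality on the coefficient sequences gives
\[
|\alpha(x)|_{h_k,\omega}\leq\sum_j|c_j|\,|s^k_j(x)|_{h_k,\omega}\leq\Big(\sum_j|c_j|^2\Big)^{1/2}\Big(\sum_j|s^k_j(x)|_{h_k,\omega}^2\Big)^{1/2}=B^q_k(x)^{1/2}.
\]
Taking the supremum over such $\alpha$ yields $S^q_k(x)\leq B^q_k(x)$.

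For the right inequality $B^q_k(x)\leq C_n^q\,S^q_k(x)$, I would decompose the density function along an orthonormal basis $\{w_\ell\}_{\ell=1}^{N}$ of $W_x$, with $N=C_n^q$. Writing $B^q_k(x)=\sum_\ell\big(\sum_j|\langle s^k_j(x),w_\ell\rangle_{h_k,\omega}|^2\big)$, each inner sum equals the squared norm of the bounded linear functional $\lambda_\ell\colon\alpha\mapsto\langle\alpha(x),w_\ell\rangle_{h_k,\omega}$ on $\cH^{n,q}(X,L^k\otimes E)$; indeed, if $k_\ell$ is the Riesz representative of $\lambda_\ell$ then $\|\lambda_\ell\|^2=\|k_\ell\|^2=\sum_j|\langle s^k_j,k_\ell\rangle|^2=\sum_j|\langle s^k_j(x),w_\ell\rangle_{h_k,\omega}|^2$ by Parseval. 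Since $|w_\ell|_{h_k,\omega}=1$, Cauchy--Schwarz on the fibre gives $|\lambda_\ell(\alpha)|\leq|\alpha(x)|_{h_k,\omega}$, whence $\|\lambda_\ell\|^2\leq S^q_k(x)$ for every $\ell$. Summing the $N=C_n^q$ terms produces the claim.

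The calculation is entirely elementary; the only genuine content is bookkeeping. The step I expect to require the most care is the right inequality, specifically the identification of each component sum $\sum_j|\langle s^k_j(x),w_\ell\rangle_{h_k,\omega}|^2$ with the operator norm $\|\lambda_\ell\|^2$ via Riesz representation and Parseval, and the verification that the resulting constant is exactly the fibre dimension $C_n^q$ and not something larger. This is precisely where the combinatorial factor $\binom nq$ enters, reflecting the fact that the values $\alpha(x)$ range over a $C_n^q$-dimensional space rather than over a line.
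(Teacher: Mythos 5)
Your proof is correct and is exactly the standard extremal-characterization argument; the paper itself gives no proof at all, declaring the lemma ``clear by the definitions'', so your write-up simply supplies the details the author omits. Both halves check out: the left inequality via orthonormal expansion plus Cauchy--Schwarz (with pointwise convergence of the series justified by the boundedness of evaluation, which you correctly flag), and the right inequality via Riesz representation and Parseval applied to the $C_n^q=\binom{n}{q}$ coordinate functionals on the fibre $\Lambda^{n,q}T_x^*X\otimes(L^k\otimes E)_x$.
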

By this lemma and the submeanvalue formulas of harmonic forms in $\cH^{n,q}(X,L^k\otimes E)$ in the section \ref{submeanvalue}, we can prove the first main result immediately.

\textbf{Proof of  Theorem \ref{thm1}}: Assume $U$ is a neighbourhood of $K$ such that $L$ is semipositive on $U$. Then we choose $K_2$ such that $K_1\subset \mathring{K_2} \subset K_2 \subset U$, and apply Theorem \ref{thm3} and Lemma \ref{lem4}.  \qed  

Now we can prove the second main results on $\Gamma$-dimension and covering manifolds.

\textbf{Proof of  Theorem \ref{thm2}}:
Under the assumption of $X$, $\Gamma$ and $X/\Gamma$, there exists an open fundamental domain $U\subset X$ of the action $\Gamma$ on $X$ such that the closure $\overline{U}$ is compact.  

Since $L$ and $E$ are $\Gamma$-invariant holomorphic Hermitian line bundles over $X$, the induced Hermitian line bundle $F:=L^k\otimes E$ is also $\Gamma$-invariant and holomorphic. Then the Kodaira Laplacian $\square:=\square^{F}$ is $\Gamma$-invariant. Thus $\square$ is essentially self-adjoint(cf. \cite{MM} Corollary 3.3.4), and we denote still by $\square$ its self-adjoint extension, which commutes to the action of $\Gamma$. According to (\cite[Lemma C.3.1, Lemma 3.6.3]{MM}), the space of harmonic $F$-valued $(n,q)$-forms 
$\cH^{n,q}(X,F)$ is a $\Gamma$-module on which $\Gamma$-dimension is well-defined. By \cite[(3.6.11), (3.6.17)]{MM}, we have 
\begin{equation}\label{eq56}
\dim_{\Gamma}\cH^{n,q}(X,F)
=\sum_{i}\int_{U}|s_i(x)|_{h^F,\omega}^2 d v_{X}(x),
\end{equation}
where $\{s_i\}$ is an orthonormal basis of $\cH^{n,q}(X,F)$ with respect to the scalar product in $L^2_{n,q}(X,F)$. Using Theorem \ref{thm1}, we have
\begin{equation}\label{eq168}
B^q_k(x):=\sum_{i}|s_i(x)|_{h_k,\omega}^2\leq C k^{n-q}
\end{equation}
for any $x\in U$. Then integrating $B^q_k(x)$ over $U$ and combining (\ref{eq168}) and (\ref{eq56}), we obtain
$\dim_{\Gamma}\cH^{n,q}(X,L^k\otimes E)
\leq C k^{n-q}$, that is the first asymptotic estimate in (\ref{eq2}). 
   
 Let $\overline{H}^{0,q}_{(2)}(X,L^k\otimes E)$ be the reduced $L^2$-Dolbeault cohomology group, which is canonically isomorphic to $\cH^{0,q}(X,L^k\otimes E)$ as $\Gamma$-modules by the weak Hodge decomposition, thus $\dim_{\Gamma}{\overline{H}}^{0,q}_{(2)}(X,L^k\otimes E)
=\dim_{\Gamma}\cH^{0,q}(X,L^k\otimes E)$. Substituting $E \bigotimes \Lambda^n (T^{(1,0)}X)$ for $E$ in the first estimate of (\ref{eq2}), then the same asymptotic estimate also holds for the space of harmonic $L^{ k}\otimes E$ valued $(0,q)$-forms and (\ref{eq16}) follows.\qed  
     
\begin{cor}\label{cor2} 
Let $(X,\omega)$ be a compact Hermitian manifold  of dimension $n$ and $(E, h^E)$ be a semipositive holomorphic Hermitian vector bundle of rank $r$ (i.e. $ L(E^{*})^{*}\geq 0$). Then there exists $C>0$ such that for any $q\geq 1$ and $k\geq 1$ we have
\begin{equation}\label{eq15}
\dim H^{q} (X, S^k (E))\leqq C k^{(n+r-1)-q}
\end{equation} 
where $S^k (E)$ is the $k$-th symmetric tensor power of $E$.
\end{cor}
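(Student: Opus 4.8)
The plan is to reduce the vector bundle statement to the line bundle statement of Theorem \ref{thm2} (or rather its compact specialization, Theorem \ref{thm1}) by passing to the projectivization of $E$ and using the tautological line bundle. Let $\pi\colon \mathbb{P}(E^*)\to X$ be the projective bundle of hyperplanes in $E$ (equivalently, lines in $E^*$), which has fiber dimension $r-1$, so that $\dim\mathbb{P}(E^*)=n+r-1$. On $\mathbb{P}(E^*)$ there is the relative hyperplane line bundle $\mathcal{O}_{E}(1)$, whose direct images satisfy $\pi_*\mathcal{O}_E(k)=S^k(E)$ and, crucially, $R^i\pi_*\mathcal{O}_E(k)=0$ for $i\geq 1$ and $k\geq 0$. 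The hypothesis that $E$ is (Griffiths, hence Nakano after the standard trick) semipositive translates into semipositivity of $\mathcal{O}_E(1)$ with respect to a suitable metric along the fibers, plus a contribution from the base.

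First I would fix the metric data: endow $\mathbb{P}(E^*)$ with the Hermitian metric $\widetilde\omega$ induced by $\omega$ on the base and the Fubini--Study metric along the fibers coming from $h^E$, and endow $\mathcal{O}_E(1)$ with the tautological quotient metric induced by $h^E$. The semipositivity assumption $L(E^*)^*\geq 0$ is designed precisely so that the Chern curvature of $(\mathcal{O}_E(1),\text{induced metric})$ is semipositive on $\mathbb{P}(E^*)$; this is the classical computation relating the curvature of $\mathcal{O}_E(1)$ to the curvature of $E$ (see Griffiths's formula). Thus $(\mathcal{O}_E(1),h)$ is a semipositive line bundle over the compact Hermitian manifold $\mathbb{P}(E^*)$ of dimension $N:=n+r-1$.

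Next I would apply the Leray spectral sequence together with the vanishing of higher direct images to get the cohomology identification
\begin{equation*}
H^q\big(X,S^k(E)\big)\cong H^q\big(\mathbb{P}(E^*),\mathcal{O}_E(k)\big)
\end{equation*}
for all $q\geq 0$ and all $k\geq 0$. One may also twist by a fixed bundle absorbing the discrepancy between $\mathcal{O}_E(k)$ and $\mathcal{O}_E(1)^{\otimes k}\otimes(\text{fixed }E')$; writing $\mathcal{O}_E(k)=\mathcal{O}_E(1)^{\otimes k}$ exactly, no auxiliary twist is needed, so in the notation of Theorem \ref{thm1} we take the base manifold to be $\mathbb{P}(E^*)$, the semipositive line bundle to be $\mathcal{O}_E(1)$, and the auxiliary bundle $E$ of Theorem \ref{thm1} to be trivial. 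Then Theorem \ref{thm1} with $K=\mathbb{P}(E^*)$ compact gives
\begin{equation*}
\dim H^q\big(\mathbb{P}(E^*),\mathcal{O}_E(k)\big)\leq C\,k^{N-q}=C\,k^{(n+r-1)-q},
\end{equation*}
which is exactly \eqref{eq15}.

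The main obstacle I anticipate is the curvature bookkeeping in the second step: verifying that the naive semipositivity hypothesis on $E$ really yields semipositivity of the induced metric on $\mathcal{O}_E(1)$ over the total space $\mathbb{P}(E^*)$, since the fiberwise Fubini--Study contribution is strictly positive but the horizontal contribution is only controlled by the curvature of $E$, and the two mix through the second fundamental form of the tautological sequence. One must check that the cross terms do not destroy semipositivity; this is where the precise Griffiths/Nakano positivity notion encoded in $L(E^*)^*\geq 0$ must be used, and it is the only genuinely nontrivial point, the rest being a standard application of Theorem \ref{thm1} and the projective bundle formula.
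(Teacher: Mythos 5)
Your proposal matches the paper's proof: both reduce to the line bundle case via the projectivization $P(E^*)$ and the Le Potier isomorphism $\dim H^q(X,S^k(E))=\dim H^q(P(E^*),(L(E^*)^*)^k)$, then apply the compact-manifold line bundle estimate on the $(n+r-1)$-dimensional total space with trivial auxiliary bundle. The curvature bookkeeping you flag as the main obstacle is a non-issue here, because the corollary's hypothesis is by definition $L(E^*)^*\geq 0$, i.e.\ semipositivity of the induced metric on $\mathcal{O}_E(1)$ over $P(E^*)$ is assumed directly rather than derived from Griffiths semipositivity of $(E,h^E)$.
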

\begin{proof}
We assume $\Gamma$ and $E$ are trivial in (\ref{eq16}), and notice the theorem of Le Potier (cf. \cite[Chap.III \S 5 (5.7)]{Kob:87} ), which relates vector bundle cohomology to line bundle chohomology, then
$\dim H^{q} (X, S^k (E))= \dim H^{q} (P(E^*), (L(E^{*})^{*})^k) \leqq C k^{(n+r-1)-q}.$
\end{proof}
  
\begin{rem}
Let $(X,\omega)$ be a complete Hermitian manifold of dimension $n$ and
$(L,h^L)$ and $(E,h^E)$ be Hermitian holomorphic line bundles over $X$. 
Suppose $X,L$ and $E$ have bounded geometry and  $L\geq 0$ over $X$. By Theorem \ref{thm1}, there exists a constant $C>0$ such that the Bergman kernel function $B^q_k(x)\leq C k^{n-q}$ for any $x\in X$, $k\geq 1$ and $q\geq 1$.
\end{rem} 

\begin{rem}
Let $(X,\omega)$ be a Hermitian manifold of dimension $n$ on which a discrete 
group $\Gamma$ acts holomorphically, freely and properly such that $\omega$ is a $\Gamma$-invariant Hermitian 
metric and the quotient $X/\Gamma$ is compact. Let $(L,h^L)$ be a $\Gamma$-invariant 
holomorphic Hermitian line bundle on $X$. Assume $L\leq 0$ is semi-negative (i.e. $L^*\geq 0$). According to Serre duality (cf. \cite[3.15]{CD:01}) and Theorem \ref{thm2},
there exists $C>0$ such that for any 
$q\leq n-1$ and  $k\geq 1$ we have
$$\dim_{\Gamma}{\overline{H}}^{0,q}_{(2)}(X, L^k)=
\dim_{\Gamma}{\overline{H}}^{n,n-q}_{(2)}(X, L^{*k})=\dim_{\Gamma}{\overline{H}}^{0,n-q}_{(2)}(X, \Lambda^n (T^{*(1,0)}X)\otimes L^{*k})\leq Ck^q.$$ In particular, for all $k\in \N$, $\dim_{\Gamma}{\overline{H}}^{0,0}_{(2)}(X, L^k)
\leq C$. 
\end{rem} 

\textbf{Acknowledgements}. The author would like to thank Professor George Marinescu for his kind advice. This work was supported by China Scholarship Council.

\end{document}